\newtheorem{thm}{Theorem}[section]
\theoremstyle{definition}
\newtheorem{remark}{Remark}
\numberwithin{remark}{section}
\newtheorem{definition}{Definition}
\numberwithin{definition}{section}
\newtheorem{pro}{Proposition}
\numberwithin{pro}{section}
\newcommand{\pvct}[1]{\bm{#1}}
\newcommand{\vct}[1]{\bm{\mathsf{#1}}}
\newcommand{\uu}{\vct{u}}
\newcommand{\www}{\vct{w}}
\newcommand{\sss}{\vct{f}}        
\newcommand{\ww}{\vct{g}}          
\newcommand{\mtx}[1]{\bm{\mathsf{#1}}}
\newcommand{\mtwo}[4]{\left[\begin{array}{cc} #1 & #2 \\ #3 & #4 \end{array}\right]}
\newcommand{\vtwo}[2]{\left[\begin{array}{cc} #1 \\ #2 \end{array}\right]}
\newcommand{\pgnotate}[1]{}
\newcommand{\lsp}{\vspace{3mm}}
\newcommand{\ngauss}{q}
\newcommand{\ncheb}{p}
\newcommand{\bi}{\begin{itemize}}
\newcommand{\ei}{\end{itemize}}
\newcommand{\ben}{\begin{enumerate}}
\newcommand{\een}{\end{enumerate}}
\newcommand{\be}{\begin{equation}}
\newcommand{\ee}{\end{equation}}
\newcommand{\bea}{\begin{eqnarray}}
\newcommand{\eea}{\end{eqnarray}}
\newcommand{\ba}{\begin{align}}
\newcommand{\ea}{\end{align}}
\newcommand{\bse}{\begin{subequations}}
\newcommand{\ese}{\end{subequations}}
\newcommand{\bc}{\begin{center}}
\newcommand{\ec}{\end{center}}
\newcommand{\bfi}{\begin{figure}}
\newcommand{\efi}{\end{figure}}
\newcommand{\bmp}[1]{\begin{minipage}{#1}}
\newcommand{\emp}{\end{minipage}}
\newcommand{\bp}{\begin{proof}}
\newcommand{\ep}{\end{proof}}
\newcommand{\tbox}[1]{{\mbox{\tiny #1}}}
\newcommand{\mbf}[1]{{\mathbf #1}}
\newcommand{\half}{\mbox{\small $\frac{1}{2}$}}
\newcommand{\C}{\mathbb{C}}
\newcommand{\R}{\mathbb{R}}
\newcommand{\RR}{\mathbb{R}^2}
\DeclareMathOperator{\re}{Re}
\newcommand{\pO}{{\partial\Omega}}
\newcommand{\Ng}{\ngauss}
\newcommand{\Nc}{\ncheb}
\newcommand{\om}{\omega} 
\newcommand{\xx}{\pvct{x}}   
\newcommand{\yy}{\pvct{y}}   
\newcommand{\ui}{u^\tbox{i}}   
\newcommand{\us}{u^\tbox{s}}   
\newcommand{\dtn}{T}
\newcommand{\iti}{R}
\newcommand{\ub}{u|_\pO}   
\newcommand{\Dx}{{\mtx{D}^{(1)}}}  
\newcommand{\Dy}{{\mtx{D}^{(2)}}}
\newcommand{\NN}{{\mtx{N}}}
\newcommand{\itim}{\mtx{\iti}}
\newcommand{\dtnm}{\mtx{\dtn}}
\begin{document}

\begin{center}
\textbf{\large A spectrally accurate direct solution technique for
frequency-domain scattering problems with variable media}

\lsp

{\small A. Gillman$^1$,  A. H. Barnett$^1$, and P. G. Martinsson$^2$\\
$^1$ Department of Mathematics, Dartmouth College \quad $^2$ Department of Applied Mathematics, University of Colorado at Boulder}

\lsp

\begin{minipage}{135mm}
\noindent\textbf{Abstract:}
This paper presents a direct solution technique for the scattering of time-harmonic
waves from a bounded region of the plane in which the wavenumber varies
smoothly in space.
%
The method constructs the interior Dirichlet-to-Neumann (DtN) map
for the bounded region via
bottom-up recursive merges of (discretization of) certain boundary operators on a quadtree of boxes.
These operators take the form of impedance-to-impedance (ItI) maps.
Since ItI maps are unitary, this formulation is inherently numerically
stable, and is immune to problems of artificial internal resonances.
The ItI maps on the smallest (leaf) boxes are built by spectral collocation
on tensor-product grids of Chebyshev nodes.
%
%
At the top level the DtN map is recovered from the ItI map and coupled
to a boundary integral formulation of the free space exterior problem,
to give a provably second kind equation.
Numerical results indicate that the scheme can solve challenging problems
$70$ wavelengths on a side to 9-digit accuracy
with 4 million unknowns, in under 5 minutes on a desktop workstation.
Each additional solve corresponding to a different incident wave
(right-hand side) then requires only
$0.04$ seconds. 
\end{minipage}
\end{center}

\section{Introduction}
\label{sec:intro}
\subsection{Problem formulation}
\label{sec:formulation}
Consider time-harmonic waves propagating in a medium where the wave speed varies smoothly, but
is constant outside of a bounded domain $\Omega\subset\RR$. This manuscript presents a technique
for numerically solving the scattering problem in such a medium.  Specifically, we
seek to compute the scattered wave $\us$ that results when a given incident wave $\ui$
(which satisfies the free space Helmholtz equation) impinges upon the region with variable wave speed,
as in Figure~\ref{fig:scattering_geometry}.
Mathematically, the scattered field $\us$ satisfies the variable coefficient Helmholtz equation
\begin{equation}
\label{eq:x1}
\Delta \us(\xx) + \kappa^{2}(1 - b(\xx))\us(\xx) = \kappa^{2}b(\xx)\ui(\xx),\qquad\xx \in \mathbb{R}^{2},
\end{equation}
and the outgoing Sommerfeld radiation condition
\begin{equation}
\label{eq:x2}
\frac{\partial \us}{\partial r} - i \kappa \us = o(r^{-1/2}), \qquad r:=|\xx| \to \infty,
\end{equation}
uniformly in angle. The real number $\kappa$ in (\ref{eq:x1}) and (\ref{eq:x2}) is the free space
wavenumber (or frequency),
and the so called ``scattering potential'' $b = b(\xx)$ is a given smooth function that specifies
how the wave speed (phase velocity)
$v(\xx)$ at the point $\xx \in \mathbb{R}^{2}$ deviates from the free space
wave speed $v_{\rm free}$,
\begin{equation}
\label{eq:def_b}
b(\xx) = 1 - \left(\frac{v_{\rm free}}{v(\xx)}\right)^{2}.
\end{equation}
One may interpret $\sqrt{1-b}$ as a spatially-varying refractive index.
Observe that $b$ is identically zero outside $\Omega$.  Together, equations (\ref{eq:x1}) and (\ref{eq:x2})
completely specify the problem.
When $1-b(\xx)$ is real and positive for all $\xx$,
the problem is known to have a unique solution for each positive $\kappa$
\cite[Thm.~8.7]{coltonkress}.


The transmission problem \eqref{eq:x1}-\eqref{eq:x2}, and its generalizations,
have applications in acoustics, electromagnetics, optics, and quantum mechanics.
Some specific applications include underwater acoustics \cite{underwater},
ultrasound and microwave tomography \cite{3dmicrowave,wadbro10},
wave propagation in metamaterials and photonic crystals, and seismology \cite{seismic3D}.
The solution technique in this paper is high-order accurate, robust and computationally highly efficient.
It is based on a direct (as opposed to iterative) solver, and thus is particularly effective
when the response of a given potential $b$ to multiple incident waves $\ui$ is desired,
as arises in optical device characterization, or computing radar scattering
cross-sections.
The complexity of the method is
$O(N^{3/2})$ where $N$ is the number of discretization points in $\Omega$.
Additional solves with the same scattering potential $b$ and wavenumber $\kappa$
require only $O(N)$
operations. (Further reductions in asymptotic complexity can sometimes be attained; see section \ref{sec:N}.)
For simplicity of presentation, the solution technique is presented in $\mathbb{R}^2$; however, the method
can be directly extended to $\mathbb{R}^3$.

\begin{remark}
Equation (\ref{eq:x1}) is derived by requiring that the total field $u = \us + \ui$ satisfy
the variable coefficient Helmholtz equation
\begin{equation}
\label{eq:totalfield}
\Delta u(\xx) + \kappa^{2}\,\left(\frac{v_{\rm free}}{v(\xx)}\right)^{2}u(\xx) = 0,\qquad \xx \in \mathbb{R}^{2}.
\end{equation}
Plugging the condition that the incident field $\ui$ satisfies the free space equation
 $(\Delta + \kappa^{2})\ui = 0$ inside $\Omega$, and the definition
of the scattering potential (\ref{eq:def_b}) into (\ref{eq:totalfield}) results in (\ref{eq:x1}).
\end{remark}

\begin{figure}
\setlength{\unitlength}{1mm}
\begin{picture}(140,50)
\put(10,00){\includegraphics[height=50mm]{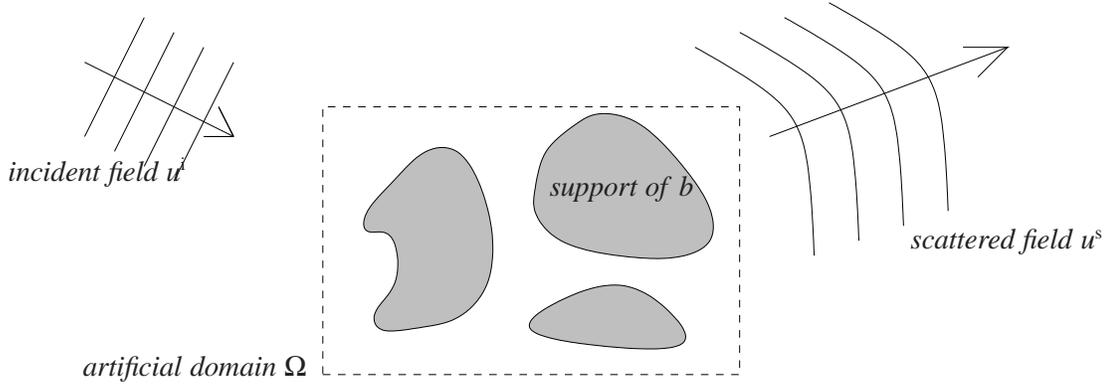}}
\put(00,26){\textit{incident field} $\ui$}
\put(120,17){\textit{scattered field} $\us$}
\put(72,24){\textit{support of} $b$}
\put(10,00){\textit{artificial domain} $\Omega$}
\end{picture}
\caption{Geometry of scattering problem. Waves propagate in a medium with constant
wave speed $v_{\rm free}$ everywhere except in the shaded areas, where the wave speed
is given by $v(\xx) = v_{\rm free}/\sqrt{1-b(\xx)}$ where $b = b(\xx)$ is a given, smooth, compactly
supported ``scattering potential.''
An incident field $\ui$ hits the scattering potential and induces the scattered field $\us$.
The dashed line marks the artificial domain $\Omega$ which encloses the support of the scattering potential.}
\label{fig:scattering_geometry}
\end{figure}

\subsection{Outline of proposed method}
We solve (\ref{eq:x1})-(\ref{eq:x2}) by splitting the problem into two parts,
namely a variable-coefficient problem on the bounded domain $\Omega$, and a constant coefficient
problem on the exterior domain $\Omega^{\rm c}:=\RR \backslash \overline{\Omega}$.
For each of the two domains, a solution operator in the form of a Dirichlet-to-Neumann
(DtN) map on the boundary $\pO$ is constructed.  These operators are then ``glued'' together
 on $\pO$ to form a solution operator for the full problem.
 The end result is a discretized boundary integral operator that takes as input the restriction of the
incoming field $\ui$ (and its normal derivative) to $\partial\Omega$, and
constructs the restriction of the scattered field $\us$ on $\partial\Omega$ (and its
normal derivative). Once these quantities are known, the total field can rapidly be
computed at any point $\xx \in \mathbb{R}^{2}$.

\subsubsection{Solution technique for the variable-coefficient problem on $\Omega$}
\label{sec:introint}
For the interior domain $\Omega$, we construct a solution operator for the following
homogeneous variable-coefficient boundary value problem where the unknown total field $u = \us + \ui$ satisfies
\begin{align}
\label{pdep}
\Delta u(\xx) +\kappa^2(1-b(\xx)) u(\xx) =&\ 0, & \xx &\in \Omega,
\\
\label{dir}
u(\xx) =&\ h(\xx) & \xx &\in \pO.
\end{align}
Note that, for now, we specify the Dirichlet data $h$;
when we consider the full problem $h$ will become an unknown that will also
be solved for.
It is known that, for all but a countable set of wavenumbers, the interior Dirichlet BVP \eqref{pdep}-\eqref{dir} has a unique solution $u$
\cite[Thm.~4.10]{mclean2000}.
The values $\{\kappa_j\}_{j=1}^{\infty}$ at which the solution is not unique are (the square roots of)
the interior Dirichlet eigenvalues of the penetrable domain $\Omega$;
we will call them
{\em resonant wavenumbers}.

\begin{definition}[interior Dirichlet-to-Neumann map]
Suppose that $\kappa>0$ is not a resonant wavenumber
of $\Omega$. Then the interior
Dirichlet-to-Neumann (DtN) operator\footnote{this is also known as the Steklov--Poincar\'e operator \cite{mclean2000}.} $T_{\rm int}:H^1(\pO)\to L^2(\pO)$ is defined by
\be
T_{\rm int} h = u_n,
\label{T}
\ee
where $u$ the solution to \eqref{pdep}--\eqref{dir} corresponding to Dirichlet data $h$, for any $h\in H^1(\pO)$.
\label{d:dtn}
\end{definition}

\begin{remark}
The operator $T_{\rm int}$ is a pseudo-differential operator of order $+1$ \cite{friedlander}; that is,
in the limit of high-frequency boundary data it behaves like a differentiation operator on $\pO$.
The boundedness as a map $T_{\rm int}:H^1(\pO)\to L^2(\pO)$ holds
for $\Omega$ any bounded Lipschitz domain since the PDE is
strongly elliptic \cite[Thm.~4.25]{mclean2000}.
\label{r:pdo}
\end{remark}

In this paper, a discrete approximation to $T_{\rm int}$ is constructed via a variation of recent
composite spectral methods in \cite{ONspectralcomposite,2012_martinsson_spectralcomposite}
(which are also similar to \cite{2013_yu_chen_totalwave}).
These methods partition $\Omega$ into a collection of small ``leaf''
boxes and construct
approximate DtN operators for each box via a brute force calculation on a local
spectral grid.  The DtN operator for $\Omega$ is then
constructed via a hierarchical merge process.
Unfortunately, at any given $\kappa$,
each of the many leaves and merging subdomains may hit a resonance as described above, causing its
DtN to fail to exist.
As $\kappa$ approaches any such resonance the norm of the DtN grows without bound.
Thus a technique based on the DtN alone is not robust.
\begin{remark}
We remind the reader that any such ``box'' resonance is purely artificial and is caused by the introduction of the solution regions.
It is important to distinguish these from
resonances that the physical scattering problem \eqref{eq:x1}-\eqref{eq:x2}
itself might possess (e.g.\ due to nearly trapped rays),
whose effect of course cannot be avoided in {\em any}
accurate numerical method.
\end{remark}

One contribution of the present work is to present
a robust improvement to the methods of \cite{ONspectralcomposite,2012_martinsson_spectralcomposite},
built upon hierarchical merges of impedance-to-impedance (ItI) rather than
DtN operators; see section~\ref{sec:spec}.
The idea of using impedance coupling builds upon the work of
Kirsch--Monk \cite{kirschmonk94}.
ItI operators are inherently stable, with condition number $O(1)$,
and thus exclude the
possibility of inverting arbitrarily ill-conditioned matrices as in
the original DtN formulation.
For instance, in the \emph{lens} experiment of section \ref{sec:numerics},
the DtN method has condition numbers as large as $2 \times 10^{5}$, while for the new ItI
method the condition number never grows larger than $20$.
The DtN of the whole domain $\Omega$ is still needed; however, if $\Omega$ has a resonance,
the size of $\Omega$ can be changed slightly to avoid the resonance (see remark \ref{re:omegaRes}).

The discretization methods in \cite{ONspectralcomposite,2012_martinsson_spectralcomposite},
and in this paper are related to earlier work on spectral collocation methods on composite
(``multi-domain'') grids, such as, e.g., \cite{1998_Kopriva,2000_Yang}, and in particular
Pfeiffer et al \cite{2003_pfeiffer}. For a detailed review of the similarities and differences,
see \cite{2012_martinsson_spectralcomposite}.

\subsubsection{Solution technique for the constant coefficient problem on $\Omega^{\rm c}$}
\label{sec:introext}
On the exterior domain, we build a solution operator for the constant coefficient problem
\begin{align}
\label{eq:y1}
\Delta \us(\xx) + \kappa^{2}\us(\xx) =&\ 0, &&\xx \in \Omega^{\rm c},\\
\label{eq:y2}
\us(\xx) =&\ s(\xx), && \xx \in \partial \Omega,\\
\label{eq:y3}
\frac{\partial \us}{\partial r} - i \kappa \us =&\ o(r^{-1/2}), &&r:=|\mbf{x}| \to \infty,
\end{align}
obtained by restricting (\ref{eq:x1})-(\ref{eq:x2}) to $\Omega^{\rm c}$.
(Again, the Dirichlet data $s$ later will become an unknown that is solved
for.)
It is known that (\ref{eq:y1})-(\ref{eq:y3})
has a unique solution for every wavenumber $\kappa$ \cite[Ch.~3]{coltonkress}.
This means that the following DtN
for the exterior domain is always well-defined.

\begin{definition}[exterior Dirichlet-to-Neumann map]
Suppose that $\kappa>0$. The exterior DtN operator $T_{\rm ext}:H^1(\pO)\to L^2(\pO)$ is defined by
\be
T_{\rm ext} s= \us_n
\label{Te}
\ee
for $\us$ the unique solution to the exterior Dirichlet BVP (\ref{eq:y1})-(\ref{eq:y3}).
\end{definition}

Numerically, we construct an approximation to $T_{\rm ext}$ by reformulating (\ref{eq:y1})-(\ref{eq:y3})
as a boundary integral equation (BIE), as described in section \ref{sec:formulatText}, and then discretizing it using a Nystr\"om method based on a high order Gaussian composite quadrature \cite{gen_quad}.

\subsubsection{Combining the two solution operators}
Once the DtN operators $T_{\rm int}$ and $T_{\rm ext}$ have been determined (as described in
sections \ref{sec:introint} and \ref{sec:introext}), and the restriction of the incident field
to $\partial \Omega$ is given, it is possible to determine the scattered field on $\partial\Omega$
as follows. First observe that the total field $u = \us + \ui$ must satisfy
\be
T_{\rm int}(\ui|_\pO+\us|_\pO) = \ui_n+\us_n.
\label{eq:bdry1}
\ee
We also know that the scattered field $\us$ satisfies
\be
T_{\rm ext}\us|_\pO = \us_n.
\label{eq:bdry2}
\ee
Combining (\ref{eq:bdry1}) and (\ref{eq:bdry2}) to eliminate $\us_n$ results in the equation (analogous to \cite[Eq.~(2.12)]{kirschmonk94}),
\be
(T_{\rm int} - T_{\rm ext}) \us|_\pO = \ui_n - T_{\rm int}\ui|_\pO.
\label{eq:intro}
\ee

As discussed in Remark~\ref{r:pdo}, both $T_{\rm int}$ and $T_{\rm ext}$ have order $+1$.
Lamentably, this behavior {\em adds rather than cancels}
in \eqref{eq:intro}, so that
$(T_{\rm int} - T_{\rm ext})$ also has order $+1$, and is therefore unbounded.
This makes any numerical discretization of \eqref{eq:intro} 
ill-conditioned,
with condition number growing linearly in the number of boundary nodes.
To remedy this, we present in section~\ref{sec:2ndkind} a new method for combining
$T_{\rm int}$ and $T_{\rm ext}$ to give a provably second kind integral
equation, which thus gives a well-conditioned linear system.

Once the scattered field is known on the boundary,
the field at any exterior point may be found
via Green's representation formula; see section~\ref{sec:exterior}.
The interior transmitted wave $u$ may be reconstructed
anywhere in $\Omega$ by applying solution operators which were built as part of
the composite spectral method.

\subsection{Prior work}
Perhaps the most common technique for solving the scattering problem stated in
Section \ref{sec:formulation} is
to discretize the variable coefficient PDE (\ref{eq:x1})
via a finite element or finite difference method,
while approximating the radiation condition in one of many ways,
including
perfectly matched layers (PML) \cite{FDS_helmholtz_PML}, absorbing boundary
conditions (ABC) \cite{ABC_1977},
separation of variables or their perturbations \cite{nichollsnigam04},
local impedance conditions \cite{britt10},
or a Nystr\"om method \cite{kirschmonk94} (as in the present work).
However, the accuracy of finite element and finite difference
schemes for the Helmholtz equation is limited by so-called ``pollution''
(dispersion) error \cite{pollution,bayliss85},
demanding an increasing number of degrees of freedom per
wavelength in order to maintain fixed accuracy as wavenumber $\kappa$ grows.
In addition, while the resulting linear system is sparse, it is also large
and is often ill-conditioned in such a way that standard pre-conditioning techniques fail, although hybrid direct-iterative
solvers such as \cite{2011_engquist_ying_PML} have proven effective in certain environments.
While there do exist fast
direct solvers for such linear systems (for low wavenumbers $\kappa$)
\cite{2011_ying_nested_dissection_2D,2011_ying_nested_dissection_3D,2009_xia_superfast,2009_martinsson_FEM}, the accuracy of the
solution is limited by the discretization.
The performance of the solver worsens when increasing the order of the discretization---%
thus it is not feasible to use a high order discretization that would overcome the
above-mentioned pollution error.


Scattering problems on infinite domains are also commonly handled by
rewriting them as volume integral equations (e.g.~the
Lippmann--Schwinger equation) defined on a domain (such as $\Omega$)
that contains the support of the scattering potential \cite{Ang,2002_chen_direct_lippman_schwinger}.
This approach is appealing in that the Sommerfeld condition (\ref{eq:x2}) is enforced
analytically, and in that high-order discretizations can be implemented
without loss of stability \cite{2008_duan_rokhlin}. Principal drawbacks are
that the resulting linear systems have dense coefficient matrices, and tend
to be challenging to solve using iterative solvers \cite{2008_duan_rokhlin}.


\subsection{Outline}
Section \ref{sec:spec} describes in detail the stable hierarchical procedure for constructing an
approximation to the DtN map $T_{\rm int}$ for the interior problem (\ref{pdep})-(\ref{dir}).
Section \ref{sec:2ndkind} describes how boundary integral equation techniques are
used to approximate the DtN map $T_{\rm ext}$ for the exterior problem (\ref{eq:y1})-(\ref{eq:y3}),
how to couple the DtN maps $T_{\rm int}$ and $T_{\rm ext}$ to solve
the full problem (\ref{eq:x1})-(\ref{eq:x2}), and the proof (Theorem~\ref{t:2ndkind})
that the formulation is second kind.
Section \ref{sec:cost}
details the computational cost of the method and explains the reduced cost for multiple incident
waves.  Finally, section \ref{sec:numerics} illustrates
the performance of the method in several challenging scattering potential
configurations.

\section{Constructing and merging impedance-to-impedance maps}
\label{sec:spec}

This section describes a technique for building a discrete approximation to the Dirichlet-to-Neumann
(DtN) operator for the interior variable coefficient BVP (\ref{pdep})-(\ref{dir}) on a square
domain $\Omega$. It relies on the
hierarchical construction of impedance-to-impedance (ItI) maps; these are defined in section \ref{sec:iti}.
Section \ref{sec:part} defines a hierarchical tree on the domain $\Omega$.
Section \ref{sec:box} explains how the ItI maps are built on the (small) leaf boxes in the tree.
Section \ref{sec:merge} describes the merge procedure whereby the global ItI map is built,
and then how the global DtN map is recovered from the global ItI map.


\subsection{The impedance-to-impedance map}
\label{sec:iti}

We start by defining the ItI map on a general Lipschitz domain, and
giving some of its properties. (In this section only, $\Omega$ will
refer to such a general domain.)

\begin{pro}
Let $\Omega\subset\RR$ be a bounded Lipschitz domain, and $b(\xx)$ be real.
Let $\eta\in\C$, and $\re \eta\neq 0$. Then the interior Robin BVP,
\bea
[\Delta + \kappa^2(1-b(\xx))]u(\xx) &=& 0 \qquad \xx \in \Omega~,
\label{pder}
\\
u_n+i\eta u|_\pO&=& f \qquad \mbox{ on } \pO~,
\label{r}
\eea
has a unique solution $u$ for all real $\kappa>0$.
\label{p:robin}
\end{pro}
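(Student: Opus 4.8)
The plan is to obtain existence from uniqueness by the Fredholm alternative, and to establish uniqueness by a Rellich-type energy identity followed by unique continuation.

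First I would recast \eqref{pder}--\eqref{r} in weak form: $u\in H^1(\Omega)$ is a solution iff
\[
a(u,v)\;:=\;\int_\Omega \nabla u\cdot\overline{\nabla v}\,d\xx\;+\;i\eta\int_{\pO} u\,\bar v\,ds\;-\;\kappa^2\int_\Omega (1-b(\xx))\,u\,\bar v\,d\xx\;=\;\int_{\pO} f\,\bar v\,ds
\]
for every $v\in H^1(\Omega)$, with $f\in H^{-1/2}(\pO)$ and the normal trace $u_n\in H^{-1/2}(\pO)$ understood in the usual sense. The ``principal'' form $a_0(u,v):=\int_\Omega\nabla u\cdot\overline{\nabla v}\,d\xx+\int_\Omega u\,\bar v\,d\xx$ is coercive on $H^1(\Omega)$, while $a-a_0$ induces a compact operator $H^1(\Omega)\to H^1(\Omega)^\ast$: the volume part is compact by the Rellich embedding $H^1(\Omega)\hookrightarrow L^2(\Omega)$, and the boundary part is compact because the trace map $H^1(\Omega)\to L^2(\pO)$ is compact on a bounded Lipschitz domain (it factors through $H^{1/2}(\pO)$). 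Hence the operator associated with $a$ is Fredholm of index zero, so it is invertible as soon as it is injective, and it suffices to treat the homogeneous problem $f\equiv 0$.

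For uniqueness, take $f\equiv 0$. Green's first identity applied to $u$ and $\bar u$, together with \eqref{pder}, gives
\[
\int_{\pO}\bar u\,u_n\,ds\;=\;\int_\Omega |\nabla u|^2\,d\xx\;-\;\kappa^2\int_\Omega (1-b(\xx))\,|u|^2\,d\xx,
\]
whose right-hand side is real since $b$ and $\kappa$ are real. The Robin condition with $f\equiv 0$ reads $u_n=-i\eta\,u$ on $\pO$, so the left-hand side equals $-i\eta\int_{\pO}|u|^2\,ds$; taking imaginary parts yields $\re\eta\int_{\pO}|u|^2\,ds=0$. Since $\re\eta\neq 0$, we get $u=0$ on $\pO$, hence also $u_n=-i\eta\,u=0$ on $\pO$, i.e.\ $u$ has vanishing Cauchy data.

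To conclude $u\equiv 0$ I would extend $u$ by zero across $\pO$ to a ball $B\supset\overline{\Omega}$ and set $\tilde b:=b\,\chi_\Omega$; the vanishing Cauchy data makes the extension $\tilde u$ lie in $H^1(B)$ and, upon integrating by parts against test functions supported in $B$, a weak solution of $\Delta\tilde u+\kappa^2(1-\tilde b)\tilde u=0$ in $B$ vanishing on the nonempty open set $B\setminus\overline{\Omega}$. The weak unique continuation property for $\Delta+V$ with $V\in L^\infty$, valid in $\R^2$, then forces $\tilde u\equiv 0$, hence $u\equiv 0$. I expect the parts needing the most care to be the Fredholm bookkeeping on a merely Lipschitz domain — pinning down the spaces for $f$ and $u_n$ and the compactness of the boundary term — and the verification that the zero-extension is genuinely a weak solution on $B$ so that unique continuation applies; the energy identity itself is entirely routine.
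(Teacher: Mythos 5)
Your proposal is correct and follows essentially the same route as the paper: uniqueness via Green's first identity, taking the imaginary part to kill the boundary trace (using $\re\eta\neq 0$), then unique continuation from vanishing Cauchy data, with existence supplied by the Fredholm alternative. The paper simply cites McLean for the Fredholm step and states unique continuation without proof, whereas you fill in the compact-perturbation and zero-extension details; both are sound.
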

\begin{proof}
We first prove uniqueness. Consider $u$ a solution to the homogeneous problem
$f\equiv 0$. Then
using Green's 1st identity and \eqref{pder}, \eqref{r},
$$-i\eta\int_\pO |u|^2 = \int_\pO\overline{u}u_n =\int_\Omega
|\nabla u|^2 - \kappa^2(1-b)|u|^2 .
$$
Taking the imaginary part shows that $u$, and hence $u_n$, vanishes on
$\pO$, hence $u\equiv 0$ in $\Omega$ by unique continuation of the Cauchy data.
Existence of $u\in H^1(\Omega)$ now follows
for data $f\in H^{-1/2}(\pO)$ from the Fredholm alternative,
as explained in this context by McLean \cite[Thm.~4.11]{mclean2000}.
\end{proof}

\begin{definition}[interior impedance-to-impedance map]
Fix $\eta\in\C$, and $\re \eta\neq 0$. Let
\bea
f&:=& u_n+i\eta u|_\pO
\label{f}
\\
g&:=&u_n-i\eta u|_\pO
\label{g}
\eea
be Robin traces of $u$. We refer to $f$ and $g$ as the ``incoming'' and ``outgoing'' (respectively)
impedance data.
For any $\kappa>0$, the \emph{interior ItI operator} $R:L^2(\pO)\to L^2(\pO)$ is defined by
\be
R f = g
\label{R}
\ee
for $f$ and $g$ the Robin traces of $u$ the solution of \eqref{pder}--\eqref{r},
for all $f \in L^2(\pO)$.
\end{definition}
We choose the impedance parameter $\eta$ (on dimensional grounds)
to be $\eta = \kappa$.  Numerically, in what follows,
we observe very little sensitivity to the exact value or sign of $\eta$.

For the following, we need the result that
the DtN map $T_{\rm int}$ is self-adjoint for real $\kappa$ and $b(\xx)$.
This holds since, for any functions $u$ and $v$ satisfying
$(\Delta + \kappa^2(1-b))u = 0$ in $\Omega$ and $(\Delta + \kappa(1-b))v = 0$
in $\Omega$,
\begin{multline*}
0= \int_\Omega \overline{v} (\Delta+\om^2(1-b))u - u(\Delta+\overline{\kappa^2(1-b)})\overline{v} = \int_\pO \overline{v}u_n - u\overline{v_n}\\
= (v|_\pO,\dtn_{\rm int} u|_\pO) - (\dtn_{\rm int} v|_\pO, u|_\pO)
\end{multline*}
by Green's second identity.
This allows us to prove the following property of the ItI map that will be
the key to the numerical stability of the method.

\begin{pro} Let $\Omega$ be a bounded Lipschitz domain, let $b(\xx)$ be real,
and let $\eta\in\C$ and $\re \eta\neq 0$.
Then the ItI map $R$ for $\Omega$ exists for all real frequencies $\kappa$,
and is unitary whenever $\eta$ is also real.
\end{pro}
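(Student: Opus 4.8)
The existence of $R$ for all real $\kappa$ is immediate from Proposition~\ref{p:robin}: the defining Robin BVP \eqref{pder}--\eqref{r} always has a unique solution, so the map $f\mapsto g$ is well-defined. The substance of the claim is unitarity when $\eta$ is real, and the plan is to prove this directly by a Green's-identity computation comparing the norms of the incoming and outgoing impedance data.

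First I would take $u$ to be the solution of \eqref{pder}--\eqref{r} with incoming data $f$, so that $g=Rf$, and write both $f$ and $g$ in terms of the Cauchy data $(u|_\pO, u_n)$ via \eqref{f}--\eqref{g}. Expanding pointwise on $\pO$,
\begin{align*}
|f|^2 - |g|^2 &= |u_n+i\eta u|_\pO|^2 - |u_n-i\eta u|_\pO|^2 \\
&= \bigl(u_n+i\eta u|_\pO\bigr)\overline{\bigl(u_n+i\eta u|_\pO\bigr)} - \bigl(u_n-i\eta u|_\pO\bigr)\overline{\bigl(u_n-i\eta u|_\pO\bigr)},
\end{align*}
which, since $\eta$ is real, simplifies after cancellation to $2i\eta\bigl(\overline{u}|_\pO\, u_n - u|_\pO\,\overline{u_n}\bigr) = -4\eta\,\im\!\left(u|_\pO\,\overline{u_n}\right)$. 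Integrating over $\pO$ gives
\[
\|f\|_{L^2(\pO)}^2 - \|g\|_{L^2(\pO)}^2 = 2i\eta\int_\pO \overline{u}u_n - u\overline{u_n}.
\]
The key step is then to show the right-hand integral vanishes. This follows from the self-adjointness of $T_{\rm int}$ for real $\kappa$ and real $b$ established just above: applying the displayed Green's-second-identity identity with $v=u$ gives $(u|_\pO, T_{\rm int}u|_\pO) = (T_{\rm int}u|_\pO, u|_\pO)$, i.e. $\int_\pO \overline{u}u_n = \int_\pO u\overline{u_n}$, so $\int_\pO(\overline{u}u_n - u\overline{u_n})=0$. (Equivalently, one can cite Green's second identity directly for $u$ against $\overline{u}$, using that $\Delta u = -\kappa^2(1-b)u$ with $\kappa^2(1-b)$ real.) Hence $\|Rf\|_{L^2(\pO)} = \|f\|_{L^2(\pO)}$ for every $f\in L^2(\pO)$, so $R$ is an isometry. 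To upgrade ``isometry'' to ``unitary'' I would note $R$ is surjective: given any $g\in L^2(\pO)$, the Robin BVP \eqref{pder}--\eqref{r} with the roles of the traces reversed — i.e. prescribing $u_n - i\eta u|_\pO = g$, which is again a well-posed Robin condition with impedance parameter $-\eta$ of nonzero real part — has a unique solution by Proposition~\ref{p:robin}, and its incoming trace $f$ satisfies $Rf=g$. An isometry of a Hilbert space onto itself is unitary, completing the proof.

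The only genuine subtlety is a regularity bookkeeping one: the Green's-identity manipulation treats $u_n$ as an $L^2$ function on $\pO$ and pairs it against $u|_\pO$, and on a general Lipschitz domain the Cauchy data live a priori only in $H^{1/2}\times H^{-1/2}$, so one should interpret the boundary integrals as the $H^{1/2}$--$H^{-1/2}$ duality pairing and invoke Green's second identity in its weak form (as in McLean), exactly as was already done in the proof of Proposition~\ref{p:robin}. Modulo this standard interpretation, the argument is a short direct computation; I expect no real obstacle.
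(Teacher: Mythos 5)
Your proof is correct, but it takes a genuinely different route from the paper's. The paper inserts $u_n = T_{\rm int}u|_\pO$ into the definitions \eqref{f}--\eqref{g} to obtain the Cayley-transform representation $R = (T_{\rm int}-i\eta)(T_{\rm int}+i\eta)^{-1}$ of \eqref{itidtn}, and reads off unitarity from the self-adjointness of $T_{\rm int}$. You instead prove the isometry property $\|Rf\|_{L^2(\pO)}=\|f\|_{L^2(\pO)}$ by a direct Green's-identity (flux-conservation) computation, and then upgrade to unitarity by a surjectivity argument: the Robin problem with impedance parameter $-\eta$ is again covered by Proposition~\ref{p:robin}, so every outgoing trace is attained. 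Your route buys something real. Provided you use your parenthetical variant --- deducing $\int_\pO(\overline{u}\,u_n - u\,\overline{u_n})=0$ directly from Green's second identity and the realness of $\kappa^2(1-b)$, rather than from self-adjointness of $T_{\rm int}$ --- the argument never requires $T_{\rm int}$ to exist, and hence covers the resonant wavenumbers at which the DtN map is undefined; this is exactly the ``all real frequencies $\kappa$'' claim, which the paper's operator identity formally presupposes away. It also sidesteps the functional-analytic care needed to justify Cayley transforms of the unbounded operator $T_{\rm int}$. What the paper's shorter proof buys in exchange is the explicit formula \eqref{itidtn} relating $R$ and $T_{\rm int}$, which is reused later (e.g.\ to derive \eqref{eq:getTback}). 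One cosmetic slip in your computation: the correct intermediate expression is $|f|^2-|g|^2 = 2i\eta\bigl(u\,\overline{u_n}-\overline{u}\,u_n\bigr)$, the negative of what you wrote (your final form $-4\eta\,\im(u\,\overline{u_n})$ is right); since the integrated quantity is shown to vanish, this does not affect the argument.
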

\begin{proof}
Existence of $\iti$ for all real $\kappa$ follows from Proposition~\ref{p:robin}.
To prove $\iti$ is unitary, we insert the definitions of $f$ and $g$ into
\eqref{R} and use the definition of the DtN to rewrite $u_n = T_{\rm int} \ub$,
giving
\be
\iti(T_{\rm int} + i\eta)\ub = (T_{\rm int} - i\eta)\ub~,
\label{eq:ItI}
\ee
which holds for any data $\ub\in H^1(\pO)$.
Thus the ItI map is given in operator form by
\be
\iti = (T_{\rm int} - i\eta)(T_{\rm int} + i\eta)^{-1}
~.
\label{itidtn}
\ee
Since $T_{\rm int}$ is self-adjoint and $\eta$ is real, this formula shows
that $\iti$ is unitary.
\end{proof}

As a unitary operator, $R$ has unit operator $L^2$-norm, pseudo-differential
order $0$, and eigenvalues lying on the unit circle. From \eqref{itidtn}
and the pseudo-differential order of $T_{\rm int}$
one may see that the eigenvalues of $R$ accumulate only at $+1$.

\begin{figure} 
\includegraphics[width=\textwidth]{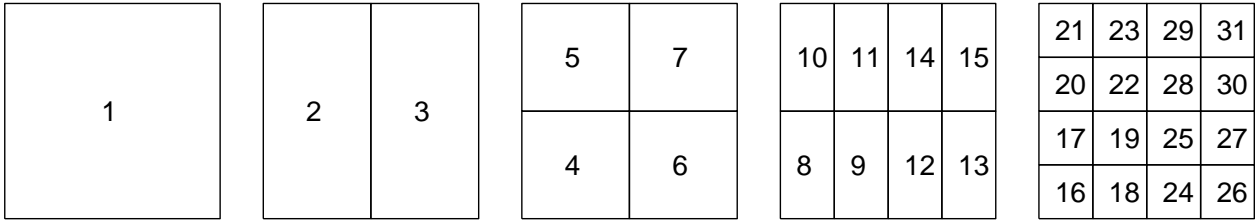}
\caption{The square domain $\Omega$ shown split hierarchically into boxes.
The case where there are $4 \times 4$ leaf boxes is shown,
ie there are $M=2$ levels.
These are then gathered into a binary tree of successively larger boxes
as described in Section \ref{sec:spec}. One possible enumeration
of the boxes in the tree is shown, but note that the only restriction is
that if box $\tau$ is the parent of box $\sigma$, then $\tau < \sigma$.}
\label{fig:tree_numbering}
\end{figure}

\begin{figure} 
\includegraphics[width = \textwidth]{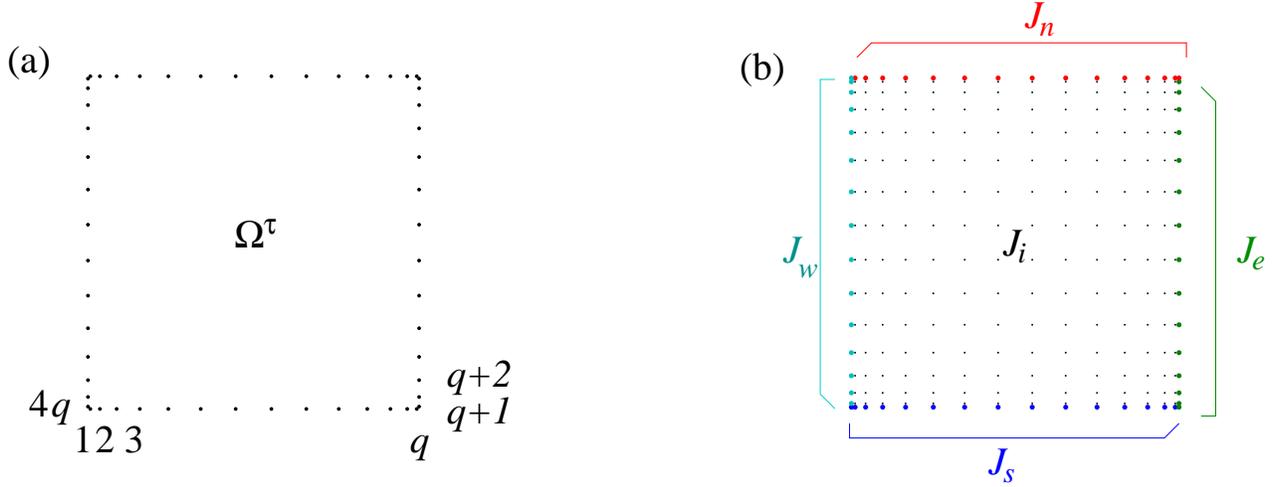}
\caption{\label{f:leafgeom} (a) The set of $4\Ng$ Gauss--Legendre points used
to represent the ItI on the boundary of a leaf box $\Omega^\tau$.
The case $\Ng = 14$ is shown.
(b) Chebyshev discretization used for the PDE spectral solution on the same
leaf box, for the case $\Nc = 16$.
Interior nodes $J_i$ are shown by small black dots.
The four sets of $\Nc-1$ Chebyshev
boundary nodes are shown (each a different color);
each set includes the start corner
(in counter-clockwise ordering) but not the end corner.
The set of all Chebyshev boundary nodes is $J_b = [J_s, \;J_e,\; J_n,\; J_w]$.
}
\end{figure}

\subsection{Partitioning of $\Omega$ into hierarchical tree of boxes}
\label{sec:part}

Recall that $\Omega$ is the square domain containing the support of $b$.
We partition $\Omega$ into a collection of $4^M$ equally-sized
square boxes called \textit{leaf boxes}, where $M$ sets the number of levels;
see Figure~\ref{fig:tree_numbering}.
We place $\Ng$ Gauss--Legendre interpolation nodes on each edge of each leaf,
which will serve to discretize all interactions of this leaf with its neighbors;
see Figure~\ref{f:leafgeom}(a).
The size of the leaf boxes, and the parameter $\Ng$,
should be chosen so that any potential transmitted wave $u$, as well as its first derivatives,
can be accurately interpolated on each box edge from their values at these nodes. 


Next we construct a binary tree over the collection of leaf boxes. This is achieved
by forming the union of adjacent pairs boxes (forming rectangular boxes), then
forming the pairwise union of the rectangular boxes.  The result is a collection
of squares with twice the side length of a leaf box.  The process is continued
until the only box is $\Omega$ itself, as in Figure
\ref{fig:tree_numbering}.  The boxes should be ordered so
that if $\tau$ is a parent of a box $\sigma$, then $\tau < \sigma$. We
also assume that the root of the tree (i.e.~the full box $\Omega$) has
index $\tau=1$. Let $\Omega^{\tau}$ denote the domain associated with box $\tau$.

\begin{remark}
The method easily generalizes to rectangular boxes, and to more complicated domains $\Omega$
in the same manner as \cite{2012_martinsson_spectralcomposite}.
\end{remark}

\subsection{Spectral approximation of the ItI map on a leaf box}
\label{sec:box}

Let $\Omega^\tau$ denote a single leaf box,
and let $\sss= \sss^\tau$ and $\ww=\ww^\tau$ be a pair of
vectors of associated incoming and outgoing impedance data, sampled at the
$4q$ Gauss--Legendre boundary nodes, with entries ordered in a
counter-clockwise fashion starting from the leftmost node of the
bottom edge of the box, as in Figure~\ref{f:leafgeom}(a).
In this section, we describe a technique for constructing
 a matrix approximation to the ItI operator on this leaf box.
Namely, we build a
$4\,\ngauss \times4\, \ngauss$ matrix $\mtx{R}$  such that
$$\ww \approx \mtx{R} \sss$$ holds to high-order accuracy,
for all incoming data vectors $\sss\in\R^{4\Ng}$ corresponding to smooth transmitted wave
solutions $u$.


First, we discretize the PDE \eqref{pder} on the square leaf box $\Omega^\tau$
using a  spectral method on a
$\Nc\times\Nc$ tensor product Chebyshev grid filling the box,
as in Figure~\ref{f:leafgeom}(b),
comprised of the nodes at locations
$$\left(\frac{a+b}{2}+hx_i, \;\frac{c+d}{2}+hx_j \right), \qquad i,j = 1,\ldots,\Nc$$
where $x_j := \cos\left(\frac{\pi (j-1)}{\Nc-1}\right)$, $j = 1,\ldots,\Nc$ are the
Chebyshev points on $[-1,1]$.
We label the Chebyshev node locations $\xx_j\in\RR$, for $j=1,\ldots,\Nc^2$.
For notational purposes, we order these nodes in the following fashion:
the indices $J_b = \{1,2,\ldots, 4(\Nc-1)\}$ correspond to the Chebyshev
nodes lying on the
boundary of $\Omega^\tau$, ordered counter-clockwise starting from
the node located at the south-west corner $(a,c)$.
The remaining $(\Nc-2)^2$ interior nodes have indices $J_i = \{4(\Nc-1)+1, \ldots, \Nc^2\}$ and
may be ordered arbitrarily (a Cartesian ordering is convenient).

Let $\Dx,\Dy \in \mathbb{R}^{\Nc^2\times \Nc^2}$ be the standard
spectral differentiation
matrices constructed on the full set of Chebyshev
nodes, which approximate the $\partial/\partial x_1$
(horizontal) and $\partial/\partial x_2$ (vertical) derivative operators, respectively.
As explained in \cite[Ch.~7]{tref},
these are constructed from Kronecker products of the $\Nc\times \Nc$ identity
matrix and $h^{-1} \mtx{D}$,
where $\mtx{D}\in\mathbb{R}^{\Nc\times \Nc}$
is the usual one-dimensional differentiation matrix
on the nodes $\{x_i\}_{i=1}^\Nc$.
Namely $\mtx{D}$ has entries $\mtx{D}_{ij} = \frac{w_j}{w_i(x_i-x_j)}$
where $\{w_j\}_{j=1}^\Nc = [1/2,-1,1,-1,\ldots,(-1)^\Nc,(-1)^{\Nc-1}/2]$ is
the vector of barycentric weights for the Chebyshev nodes (see \cite[Ch.~6]{tref} and \cite[Eqn.(8)]{salzer}.)
Let the matrix $\mtx{A}\in\mathbb{R}^{\Nc^2\times \Nc^2}$ be the spectral discretization
of the operator $\Delta + \kappa^2(1-b(\mbf{x}))$ on the product Chebyshev grid,
namely $$\mtx{A} = (\Dx)^2 + (\Dy)^2 + \mbox{ diag }\{\kappa^2(1-b(\mbf{x}_j))\}_{j=1}^{\Nc^2}~,$$
where ``diag S'' indicates the diagonal matrix whose entries are the elements of the ordered set $S$.
\begin{remark}
The matrices $\Dx$, $\Dy$, and $\mtx{A}$ must have rows and columns
ordered as explained above (i.e.\ boundary then interior) for
the Chebyshev nodes;
this requires permuting rows and columns of the matrices constructed
by Kronecker products.
For example, the structure of $\mtx{A}$ is
$$\left[
\begin{array}{ll}
\mtx{A}_{bb} & \mtx{A}_{bi} \\
\mtx{A}_{ib} & \mtx{A}_{ii}
\end{array}
\right]
~,$$
where the notation $\mtx{A}_{bi}$ indicates the submatrix block $\mtx{A}(J_b,J_i)$, etc.
\end{remark}

We now break the $4(\Nc-1)$
boundary Chebyshev nodes into four sets $J_b = [J_s, \;J_e,\; J_n,\; J_w]$,
denoting the south, east, north, and west edges,
as in Figure~\ref{f:leafgeom}(b).
Note that $J_s$ includes the south-western corner $J_s(1)$ but not the
south-eastern corner (which in turn is the first element of $J_e$), etc.

We are now ready to derive the linear system required for constructing
the approximate ItI operator. 
We first build a matrix $\NN\in\R^{4(\Nc-1)\times \Nc^2}$ which maps values of
$u$ at all Chebyshev nodes to the outgoing normal derivatives at the boundary
Chebyshev nodes, as follows,
\be
\NN = \left[\begin{array}{r}-\Dy(J_s,:)\\\Dx(J_e,:)\\\Dy(J_n,:)\\-\Dx(J_w,:)\end{array}\right],
\label{N}
\ee
where (as is standard in MATLAB) the notation $\mtx{A}(S,:)$
denotes the matrix formed from the subset of rows of a matrix $\mtx{A}$
given by the index set $S$.
Then, recalling \eqref{r},
the matrix $\mtx{F}\in\R^{4(\Nc-1)\times \Nc^2}$ which maps
the values of
$u$ at all Chebyshev nodes to incoming impedance data
on the boundary Chebyshev nodes is
\be
\mtx{F} \;=\; \NN \;+\; i\eta \mtx{I}_{p^2}(J_b,:)~,
\ee
where $\mtx{I}_{\Nc^2}$ denotes the identity matrix of size $\Nc^2$.
Using $\uu\in\R^{\Nc^2}$ for the vector of $u$ values at all Chebyshev nodes,
the linear system for the unknown $\uu$ that imposes the spectral discretization of the PDE at all interior nodes,
and incoming impedance data $\sss_c\in\R^{4(\Nc-1)}$
at the boundary Chebyshev nodes,
is
\be
\mtx{B} \uu = \left[\begin{array}{l}\sss_c \\ \mtx{0}\end{array}\right]
\label{eq:localsolve}
\ee
where $\mtx{0}$ is an appropriate column vector of zeros, and
the square size-$\Nc^2$ system matrix is
$$
\mtx{B} := \left[\begin{array}{l}
\mtx{F} \\ 
\mtx{A}(J_i,:)
\end{array}\right].
$$
\begin{remark}
At each of the four corner nodes, only one boundary condition is imposed,
namely the one associated with the edge lying in the counter-clockwise
direction. This results in a square linear system, which we observe
is around twice
as fast to solve as a similar-sized rectangular one.
\end{remark}
To construct the $\Nc^{2} \times 4(\Nc-1)$ ``solution matrix''
$\mtx{X}$ for the linear system, we
solve \eqref{eq:localsolve} for each unit vector in $\R^{4(\Nc-1)}$,
namely
$$\mtx{B}\mtx{X} = \left[\begin{array}{l}
\mtx{I}_{4\Nc-4} \\ 
\mtx{0}_{(\Nc-2)^2\times(4\Nc-4)} \end{array}\right]
~.
$$
In practice, $\mtx{X}$ is found using the backwards-stable solver available via MATLAB's {\tt mldivide}
command.  If desired, the tabulated solution $\uu$ can now
be found at all the Chebyshev nodes
by applying $\mtx{X}$ to the right hand side of \eqref{eq:localsolve}.

Recall that the goal is to construct matrices that act on boundary data on
Gauss (as opposed to Chebyshev) nodes.
With this in mind, let $\mtx{P}$
be the matrix which performs Lagrange polynomial
interpolation \cite[Ch.~12]{tyrt}
from $\Ng$ Gauss to $\Nc$ Chebyshev points on a single edge,
and let $\mtx{Q}$ be the matrix from Chebyshev to Gauss points.
Let $\mtx{P}_0\in\mathbb{R}^{(\Nc-1)\times \Ng}$ be $\mtx{P}$ with the last row
omitted. For example, $\mtx{P}_0$ maps from Gauss points on the south edge to
the Chebyshev points $J_s$. 

Then the solution matrix which takes incoming impedance data on Gauss nodes to
the values $\uu$ at all Chebyshev nodes is
\be
\mtx{Y} = \mtx{X}
\left[\begin{array}{llll}\mtx{P}_0& & &\\&\mtx{P}_0&&\\&&\mtx{P}_0&\\&&&\mtx{P}_0\end{array}\right]
~.
\label{Y}
\ee
Finally, we must extract outgoing impedance data on Gauss nodes
from the vector $\uu$,
to construct an approximation $\itim$ to the full ItI map on the Gauss nodes.
This is done by extracting (as in \eqref{N}) the relevant rows of the spectral differentiation matrices,
then interpolating back to Gauss points.
Let $J_s' := [J_s,J_e(1)]$ be the indices of all $\Nc$ Chebyshev nodes on the
south edge, and correspondingly for the other three edges.
Then the index set $J_b' := [J_s',\,J_e',\,J_n',\,J_w']$
counts each corner {\em twice}.\footnote{Including both endpoints allows more accurate
interpolation back to Gauss nodes;
functions on each edge are available at all Chebyshev nodes for that edge.}
Then let $\mtx{G}\in \mathbb{R}^{4\Nc\times \Nc^2}$ be the matrix mapping values of $\uu$ to the outgoing
impedance data with respect to each edge, given by
$$
\mtx{G} =  \left[\begin{array}{r}-\Dy(J_s',:)\\\Dx(J_e',:)\\\Dy(J_n',:)\\-\Dx(J_w',:)\end{array}\right]
- i\eta \mtx{I}_{\Nc^2}(J_b',:)
~.
$$
Then, in terms of \eqref{Y},
$$
\itim =
\left[\begin{array}{llll}\mtx{Q}& & &\\&\mtx{Q}&&\\&&\mtx{Q}&\\&&&\mtx{Q}\end{array}\right]
\mtx{G} \mtx{Y}
$$
is the desired spectral approximation to the ItI map on the leaf box.

The computation time is dominated by the solution step for $\mtx{X}$, which takes
effort $O(\Nc^6)$.
We observe empirically that one must choose $\Nc > \Ng+1$ in order that $\itim$ not acquire a spurious
numerical null space. We typically choose $\Ng = 14$ and $\Nc = 16$.

\subsection{Merging ItI maps}
\label{sec:merge}
Once the approximate ItI maps are constructed
on the boundary Gauss nodes on the leaf boxes, the ItI map defined on $\Omega$
is constructed by merging two boxes at a time,
moving up the binary tree as described in section
\ref{sec:part}.
This section demonstrates the purely local construction
of an ItI operator for a box from the ItI operators of
its children.


We begin by introducing some notation.
Let $\Omega^\tau$  denote a box with children $\Omega^\alpha$ and
$\Omega^\beta$ where $\Omega^\tau = \Omega^\alpha \cup \Omega^\beta$.
  For concreteness, consider the case where
$\Omega^{\alpha}$ and $\Omega^\beta$ share
a vertical edge.
As shown in Figure \ref{fig:siblings_notation},
the Gauss points on $\partial\Omega^{\alpha}$ and $\partial\Omega^{\beta}$ are partitioned into three sets:
\begin{tabbing}
\mbox{}\hspace{5mm}\= $J_{1}$:\hspace{4mm} \=
Boundary nodes of $\Omega^{\alpha}$ that are not boundary nodes of $\Omega^{\beta}$.\\
\> $J_{2}$: \> Boundary nodes of $\Omega^{\beta}$ that are not boundary nodes of $\Omega^{\alpha}$.\\
\> $J_{3}$: \> Boundary nodes of both $\Omega^{\alpha}$ and $\Omega^{\beta}$ that are \textit{not} boundary nodes of the union box
$\Omega^{\tau}$.
\end{tabbing}
Define interior and exterior outgoing data via
$$
\ww^i = \ww_3^\alpha \qquad \mbox{ and } \qquad \ww^{e} = \vtwo{\ww^\alpha_1}{\ww^\beta_2}.
$$
The incoming vectors $\sss^i$ and $\sss^e$ are defined similarly.  The goal is to obtain an
equation mapping $\sss^e$ to $\ww^e$.
Since the ItI operators for $\Omega^\alpha$ and $\Omega^\beta$ have previously been constructed, we have the following two equations
\be
 \mtwo{\mtx{R}^\alpha_{11}}{\mtx{R}^\alpha_{13}}{\mtx{R}^\alpha_{31}}{\mtx{R}^\alpha_{33}} \vtwo{\sss^\alpha_1}{\sss^\alpha_3} = \vtwo{\ww^\alpha_1}{\ww^\alpha_3}; \qquad
 \mtwo{\mtx{R}^\beta_{22}}{\mtx{R}^\beta_{23}}{\mtx{R}^\beta_{32}}{\mtx{R}^\beta_{33}} \vtwo{\sss^\beta_2}{\sss^\beta_3} = \vtwo{\ww^\beta_1}{\ww^\beta_3}.
\label{eq:doubsys}
\ee
Since the normals of the two leaf boxes are opposed on the interior ``3'' edge,
$\ww^\alpha_3 = -\sss^\beta_3$ and $\sss^\alpha_3 = -\ww^\beta_3$ (using the definitions \eqref{f}-\eqref{g}).
This allows the bottom row equations to be rewritten using only $\alpha$ on the interior edge, namely
\be
\mtx{R}^\alpha_{31}\sss^\alpha_1+\mtx{R}^\alpha_{33}\sss^\alpha_3 = \ww^\alpha_3
\label{eq:fromalpha}
\ee
and
\be {\mtx{R}^\beta_{32}}\sss^\beta_2-{\mtx{R}^\beta_{33}}\ww^\alpha_3 = -\sss^\alpha_3.
 \label{eq:frombeta}
\ee

Let $\mtx{W} := \left(\mtx{I}-{\mtx{R}^\beta_{33}}\mtx{R}^\alpha_{33}\right)^{-1}$.
Plugging (\ref{eq:fromalpha}) into (\ref{eq:frombeta}) results in the following equation
$$ {\mtx{R}^\beta_{32}}\sss^\beta_2-{\mtx{R}^\beta_{33}}\left(\mtx{R}^\alpha_{31}\sss^\alpha_1+\mtx{R}^\alpha_{33}\sss^\alpha_3\right) = -\sss^\alpha_3.$$
By collecting like terms and solving for $\sss^\alpha_3$, we find
\be
\begin{array}{rl}
\sss^\alpha_3 &=  \mtx{W}\left(\mtx{R}^\beta_{33}\mtx{R}^\alpha_{31}\sss^\alpha_1- {\mtx{R}^\beta_{32}}\sss^\beta_2\right)\\
             & = \left[\mtx{W}\mtx{R}^\beta_{33}\mtx{R}^\alpha_{31}\;\;-\!\mtx{W}{\mtx{R}^\beta_{32}}\right]\vtwo{\sss^\alpha_1}{\sss^\beta_2}.
\end{array}
\label{eq:solutionf}
\ee
Note that the matrix $\mtx{S}^\alpha :=  \left[\mtx{W}\mtx{R}^\beta_{33}\mtx{R}^\alpha_{31}\quad -\!\mtx{W} {\mtx{R}^\beta_{32}}\right]$
maps the incoming impedance data on $\Omega^\tau$ to the incoming (with respect to $\alpha$) impedance data on the interior edge. By combining
the relationship between the impedance boundary data on neighbor boxes and (\ref{eq:fromalpha}), the matrix
$\mtx{S}^\beta = -\left[\itim^\alpha_{33}+\mtx{W}\mtx{R}^\beta_{33}\mtx{R}^\alpha_{31}\quad -\!\mtx{W} {\mtx{R}^\beta_{32}}\right]$
computes the impedance data $\sss^\beta_3$.

The outgoing impedance data $\ww^\alpha_3$ is found by plugging $\sss^\alpha_3$ into equation (\ref{eq:fromalpha}).
Now the top row equations of (\ref{eq:doubsys}) can be rewritten without reference to the interior edge.
The top row equation of (\ref{eq:doubsys}) from $\Omega^\alpha$ is now
$$\left({\mtx{R}^\alpha_{11}}+{\mtx{R}^\alpha_{13}}\mtx{W}\mtx{R}^\beta_{33}\mtx{R}^\alpha_{31}\right)\sss^\alpha_1-{\mtx{R}^\alpha_{13}}\mtx{Q}{\mtx{R}^\beta_{32}}\sss^\beta_2 = \ww^\alpha_1$$
and the top row equation of (\ref{eq:doubsys}) from $\Omega^\beta$ is
$$\left(\mtx{R}^\beta_{22}+\mtx{R}_{23}^\beta\mtx{R}^\alpha_{33}\mtx{W}\mtx{R}_{32}^\beta\right)\sss^\beta_2 -
\mtx{R}_{23}^\beta\left(\mtx{R}^\alpha_{31}+\mtx{R}^\alpha_{33}\mtx{W}\mtx{R}^\alpha_{33}\mtx{R}_{31}^\alpha\right)\sss_1^\alpha = \ww^\beta_2.$$
Writing these equations as a system, we find $\sss^\alpha_1$ and $\sss^\beta_2$ satisfy
\begin{equation}\mtwo{{\mtx{R}^\alpha_{11}}+{\mtx{R}^\alpha_{13}}\mtx{W}\mtx{R}^\beta_{33}\mtx{R}^\alpha_{31}}{-{\mtx{R}^\alpha_{13}}\mtx{W}{\mtx{R}^\beta_{32}}}
{-\mtx{R}_{23}^\beta\left(\mtx{R}^\alpha_{31}+\mtx{R}^\alpha_{33}\mtx{W}\mtx{R}^\alpha_{33}\mtx{R}_{31}^\alpha\right)}{\quad\mtx{R}^\beta_{22}+\mtx{R}_{23}^\beta\mtx{R}^\alpha_{33}\mtx{Q}\mtx{R}_{32}^\beta}
\vtwo{\sss^\alpha_1}{\sss^\beta_2} = \vtwo{\ww^\alpha_1}{\ww^\beta_2}.
\label{eq:R_merge}\end{equation}
Thus the block matrix on the left hand side of (\ref{eq:R_merge}) is $\itim^\tau$, the ItI operator for $\Omega^\tau$.


\begin{remark}
In practice, the matrix products $\mtx{W}\mtx{R}^\beta_{33}\mtx{R}^\alpha_{31}$ and $\mtx{W}\mtx{R}^\beta_{32}$
should be computed once per merge.
\end{remark}

\begin{remark}
Note that the formula \eqref{eq:R_merge} is quite different from that for
merging DtN maps appearing in prior work \cite{ONspectralcomposite,2012_martinsson_spectralcomposite}.
The root cause is the way the equivalence of
incoming and outgoing data on the interior edge differs from the case
of Dirichlet and Neumann data.
\end{remark}

Algorithm~1 outlines the implementation of the hierarchical construction of the
impedance operators, by repeated application of the above merge operation.
Algorithm~2 illustrates the downwards sweep to construct
from incoming impedance data $f$
the solution at all Chebyshev discretization points in $\Omega$.
Note that the latter requires the solution matrices $\mtx{S}$ at each level,
and $\mtx{Y}$ for each of the leaf boxes, that were precomputed by Algorithm~1.

The resulting approximation to the top-level ItI operator
$\itim = \itim^1$ is a square matrix which acts on incoming impedance
data living on the total of $4\Ng 2^M$ composite Gauss nodes on $\pO$.
An approximation $\mtx{T}_{\rm int}$ to the interior DtN map
on these same nodes
now comes from inverting equation (\ref{eq:ItI}) for $T_{\rm int}$, to give
\begin{equation}
\mtx{T}_{\rm int} = -{\rm i}\eta\left(\mtx{R}-\mtx{I}\right)^{-1}\left(\mtx{R}+\mtx{I}\right)
~,
\label{eq:getTback}
\end{equation}
where $\mtx{I}$ is the identity matrix of size $4\Ng 2^M$.
The need for conversion from the ItI to the DtN for the domain $\Omega$ will become
clear in the next section.

\begin{remark}
Due to the pseudo-differential order $+1$ of $T_{\rm int}$, we expect the norm of
$\mtx{T}_{\rm int}$ to grow linearly in the number of boundary nodes.
However, it is also possible that $\kappa$ falls close enough to a resonant wavenumber of $\Omega$
that the norm of $\mtx{T}_{\rm int}$ is actually much larger,
resulting in a loss of accuracy due to the inversion of the nearly-singular
matrix $\mtx{R} -\mtx{I}$ in (\ref{eq:getTback}).
In our extensive numerical experiments,
this latter problem has never happened.
However, it is important to include a condition number check
when formula (\ref{eq:getTback}) is evaluated. Should there be a problem, it would be
a simple matter to modify slightly the domain to avoid a resonance. For instance, one
can add a column of leaf boxes to the side of $\Omega$, and then inexpensively update the
computed ItI operator for $\Omega$ to the enlarged domain.
\label{re:omegaRes}
\end{remark}

\begin{figure} 
\setlength{\unitlength}{1mm}
\begin{picture}(95,50)
\put(05,00){\includegraphics[height=50mm]{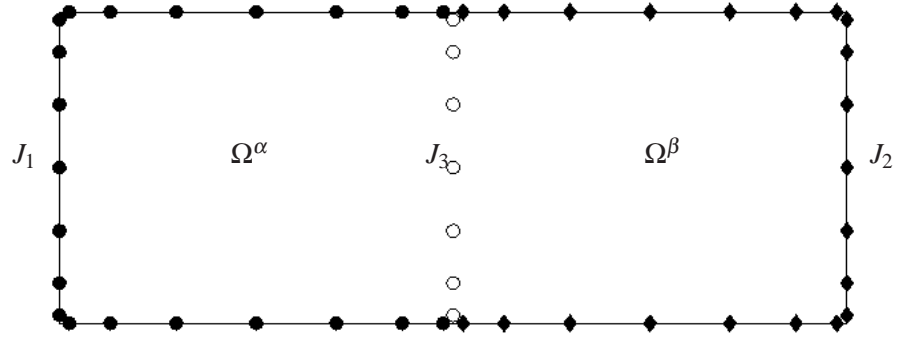}}
\put(32,25){$\Omega^{\alpha}$}
\put(87,25){$\Omega^{\beta}$}
\put(03,25){$J_{1}$}
\put(117,25){$J_{2}$}
\put(58,25){$J_{3}$}
\end{picture}
\caption{Notation for the merge operation described in Section \ref{sec:merge}.
The rectangular domain $\Omega^\tau$ is the union of
two squares $\Omega^{\alpha}$ and $\Omega^{\beta}$.
The sets $J_{1}$ and $J_{2}$ form the exterior Gauss nodes (black), while
$J_{4}$ consists of the interior Gauss nodes (white). (An unrealistically
small node number $\Ng=7$ is used for visual clarity.)
\label{fig:siblings_notation}
}
\end{figure}

\begin{figure}[ht]
\fbox{
\begin{minipage}{140mm}
\begin{center}
\textsc{Algorithm 1} (build solution matrices)
\end{center}

This algorithm builds the global Dirichlet-to-Neumann matrix $\mtx{T}_{\rm int}$ for (\ref{pdep})-(\ref{dir}).
For $\tau$ a leaf box, the algorithm builds the solution matrix $\mtx{Y}^{\tau}$ that maps
impedance data at Gauss nodes to the solution at the interior Chebyshev nodes.
For non-leaf boxes $\Omega^\tau$, it builds the matrices $\mtx{S}^{\tau}$ required for constructing $\sss$ impedance data
on interior Gauss nodes.
It is assumed that if box $\Omega^\tau$ is a parent of box $\Omega^\sigma$, then $\tau < \sigma$.

\rule{\textwidth}{0.5pt}

\begin{tabbing}
\mbox{}\hspace{7mm} \= \mbox{}\hspace{6mm} \= \mbox{}\hspace{6mm} \= \mbox{}\hspace{6mm} \= \mbox{}\hspace{6mm} \= \kill
(1)\> \textbf{for} $\tau = N_{\rm boxes},\,N_{\rm boxes}-1,\,N_{\rm boxes}-2,\,\dots,\,1$\\
(2)\> \> \textbf{if} ($\Omega^\tau$ is a leaf)\\
(3)\> \> \> Construct $\itim^{\tau}$ and $\mtx{Y}^\tau$ via the process described in Section \ref{sec:box}.\\
(4)\> \> \textbf{else}\\
(5)\> \> \> Let $\Omega^\alpha$ and $\Omega^\beta$ be the children of $\Omega^\tau$.\\
(6)\> \> \> Split $J_b^{\alpha}$ and $J_b^{\beta}$ into vectors $J_{1}$, $J_{2}$, and $J_{3}$ as shown in Figure \ref{fig:siblings_notation}.\\
(7)\> \> \> $\mtx{W} = \left(\mtx{I}-{\mtx{R}^{\beta}_{33}}\mtx{R}^{\alpha}_{33}\right)^{-1}$\\
(8)\> \> \> $\itim^\tau = \mtwo{{\mtx{R}^{\alpha}_{11}}+{\mtx{R}^{\alpha}_{13}}\mtx{W}\mtx{R}^{\beta}_{33}\mtx{R}^{\alpha}_{31}}{-{\mtx{R}^{\alpha}_{13}}\mtx{W}{\mtx{R}^{\beta}_{32}}}
{-\mtx{R}_{23}^{\beta}\left(\mtx{R}^{\alpha}_{31}+\mtx{R}^{\alpha}_{33}\mtx{W}\mtx{R}^{\alpha}_{33}\mtx{R}_{31}^{\alpha}\right)}{\quad\mtx{R}^{\beta}_{22}+\mtx{R}_{23}^{\beta}\mtx{R}^{\alpha}_{33}\mtx{Q}\mtx{R}_{32}^{\beta}}$\\
(9)\> \> \> $\mtx{S}^{\alpha} =  \left[\mtx{W}\mtx{R}^{\beta}_{33}\mtx{R}^{\alpha}_{31}\quad -\!\mtx{W} {\mtx{R}^{\beta}_{32}}\right]$.\\
(10)\> \> \> $\mtx{S}^{\beta} = -\left[\itim^{\alpha}_{33}+\mtx{W}\mtx{R}^{\beta}_{33}\mtx{R}^{\alpha}_{31}\quad -\!\mtx{W} {\mtx{R}^{\beta}_{32}}\right]$.\\
(10)\> \> \> Delete $\itim^{\alpha}$ and $\itim^{\beta}$.\\
(11)\> \> \textbf{end if}\\
(12)\> \textbf{end for}\\
(13)\> $\dtnm_{\rm int} = -{\rm i}\eta(\itim^1-\mtx{I})^{-1}(\itim^1+\mtx{I})$
\end{tabbing}
\end{minipage}}
\end{figure}

\begin{figure}[ht]
\fbox{
\begin{minipage}{140mm}
\begin{center}
\textsc{Algorithm 2} (solve BVP (\ref{pder})-(\ref{r}) once solution matrices have been built)
\end{center}

This program constructs an approximation $\uu$ to the solution $u$ of (\ref{pder})-(\ref{r}).
It assumes that all matrices $\mtx{S}^{\tau},\mtx{Y}^\tau$
have already been constructed.
It is assumed that if box $\Omega^\tau$ is a parent of box $\Omega^\sigma$, then $\tau < \sigma$. We call $J^\tau$ the indices of nodes in box $\Omega^\tau$.

\rule{\textwidth}{0.5pt}

\begin{tabbing}
\mbox{}\hspace{7mm} \= \mbox{}\hspace{6mm} \= \mbox{}\hspace{6mm} \= \mbox{}\hspace{6mm} \= \mbox{}\hspace{6mm} \= \kill
(1)\> \textbf{for} $\tau = 1,\,2,\,3,\,\dots,\,N_{\rm boxes}$\\
(2)\> \> \textbf{if} ($\Omega^\tau$ is a leaf)\\
(3)\> \> \>$\uu(J^\tau) = \mtx{Y}^{\tau}\,\sss^\tau$.\\
(4)\> \> \textbf{else} \\
(5)\> \> \> Let $\Omega^\alpha$ and $\Omega^\beta$ be the children of $\Omega^\tau$.\\
(6)\> \> \>$\sss^{\alpha}_3 = \mtx{S}^{\alpha}\sss^\tau$, $\sss^{\beta}_3 = \mtx{S}^{\beta}\sss^\tau$.\\
(7)\>\> \textbf{end if}\\
(8)\> \textbf{end for}
\end{tabbing}
\end{minipage}}
\end{figure}

\section{Well-conditioned boundary integral formulation for scattering}
\label{sec:2ndkind}


In this section, we present an improved
boundary integral equation alternative to the scattering formulation (\ref{eq:intro})
from the introduction.


\subsection{Formula for the exterior DtN operator $T_{\rm ext}$}
\label{sec:formulatText}

We first construct the exterior DtN operator $T_{\rm ext}$
via potential theory. The starting point is
Green's exterior representation formula \cite[Thm.~2.5]{coltonkress},
which states that
any radiative solution $\us$ to the Helmholtz equation in $\Omega^c$ may be written,
\begin{equation}
\us(\xx) = \left(\mathcal{D}\us|_\pO \right)(\xx) - \left(\mathcal{S} \us_n\right)(\xx),
\qquad\mbox{for}\ \xx\in \Omega^c,
\label{eq:greens}
\end{equation}
where $\left(\mathcal{D}\phi\right)(\xx) := \int_{\pO}\frac{\partial}{\partial n_{\yy}} \bigl(\frac{i}{4}H^{(1)}_0(\kappa|\xx-\yy|)\bigr)\phi(\yy)ds_{\yy}$ and
$\left(\mathcal{S}\phi\right)(\xx) :=\int_{\pO} \frac{i}{4}H^{(1)}_0(\kappa|\xx-\yy|) \phi(\yy)ds_{\yy}$
are respectively the frequency-$\kappa$ Helmholtz
double- and single-layer potentials with density $\phi$,
with $H^{(1)}_0$ the outgoing Hankel function of order zero. ,
The vector $n_{\yy}$ denotes the outward unit normal at $\yy\in\pO$,

Letting $\xx$ approach $\pO$ in (\ref{eq:greens}), one finds via the jump relations,
\begin{equation}
\label{eq:victorinox}
\half\us(\xx) = \left(D \us|_{\pO}\right)(\xx) - \left(S \us_n\right)(\xx),\qquad \xx \in \pO,
\end{equation}
where $D$ and $S$ are the double- and single-layer boundary integral operators on $\pO$.
See \cite[Ch.~3.1]{coltonkress} for an introduction to these representations and operators.
Rearranging (\ref{eq:victorinox}) gives
$ \us_n = S^{-1} \left(D-\half I\right)\us|_\pO$,
thus the exterior DtN operator is given in terms of the operators of potential theory by
\be
T_{\rm ext} = S^{-1} \left(D-\half I\right)
~.
\label{Text}
\ee

\subsection{The new integral formulation}
We apply from the left the single layer integral operator $S$
to both sides of \eqref{eq:intro}, and use \eqref{Text}, to obtain
\be
\left(\half I -D +ST_{\rm int}\right) \us|_\pO = S\left(\ui_n - T_{\rm int} \ui|_{\pO}\right)
~,
\label{eq:int2}
\ee
a linear
equation for $\us|_\pO$, the restriction of the scattered wave to the domain boundary $\pO$.
Let
\be
A:=\half I -D +ST_{\rm int}
\label{A}
\ee
be the boundary integral operator appearing in the above formulation.
In the trivial case $b\equiv 0$ (no scattering potential)
it is easy to check that $A = I$, by using
$T_{\rm int} = S^{-1}(D+\half I)$ which can be derived in this case similarly to \eqref{Text}.
Now we prove that introducing a general scattering potential perturbs $A$ only compactly,
that is, our left-regularization of the original ill-conditioned \eqref{eq:intro}
has produced a well-conditioned equation.
\begin{thm} 
Let $\Omega\subset\R^2$ be a bounded Lipschitz domain
containing the support of a bounded scattering potential function $b$.
Let $\kappa>0$ not be a resonant wavenumber of $\Omega$.
Then the operator \eqref{A} takes the form $$A = I + K$$
where $K:L^2(\pO)\to L^2(\pO)$ is compact; thus the integral formulation \eqref{eq:int2}
is of Fredholm second kind.
\label{t:2ndkind}
\end{thm}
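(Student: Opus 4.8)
The plan is to show that the left‑regularized operator $A$ of \eqref{A} equals the identity plus the Dirichlet trace of a Helmholtz area potential, which is smoothing of order two, and then to read off Fredholmness. Fix $h\in H^1(\pO)$, the natural domain of $T_{\rm int}$, which is dense in $L^2(\pO)$, and let $u=u_h$ be the solution of \eqref{pdep}--\eqref{dir}. Since $(\Delta+\kappa^2(1-b))u=0$ we have $(\Delta+\kappa^2)u=\kappa^2 b\,u$ in $\Omega$, and $\kappa^2 b\,u\in L^2(\Omega)$ because $b$ is bounded. Applying Green's representation formula to $u$ in $\Omega$ --- as in \eqref{eq:greens}, but now for the interior domain and retaining the volume contribution --- gives
\[
u \;=\; -\,\mathcal{D}\bigl(u|_\pO\bigr)\;+\;\mathcal{S}\bigl(u_n\bigr)\;-\;V\!\left(\kappa^2 b\,u\right)\qquad\text{in }\Omega,
\]
where $V\phi(\xx):=\int_\Omega \tfrac{i}{4}H_0^{(1)}(\kappa|\xx-\yy|)\,\phi(\yy)\,d\yy$ is the Helmholtz area (``volume'') potential. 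Letting $\xx\to\pO$ and using the interior jump relation for the double layer together with the continuity across $\pO$ of $\mathcal{S}$ and $V$, one rearranges the resulting boundary identity to
\[
S\,T_{\rm int}\,h \;=\; \left(\tfrac12 I + D\right)h \;+\; \gamma_0 V\!\left(\kappa^2 b\,u_h\right),
\]
$\gamma_0$ denoting the trace onto $\pO$. Substituting this into \eqref{A}, the boundary‑integral terms cancel \emph{exactly}: $Ah = h + \gamma_0 V(\kappa^2 b\,u_h)=:h+Kh$. (When $b\equiv 0$ this recovers $A=I$, consistent with the remark preceding the theorem.) So the claim reduces to showing that $K$ extends from $H^1(\pO)$ to a compact operator on $L^2(\pO)$.

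To that end I would factor $K=\gamma_0\,V\,M_{\kappa^2 b}\,E$, where $E:h\mapsto u_h$ is the interior solution operator for \eqref{pdep}--\eqref{dir} and $M_{\kappa^2 b}$ is multiplication by $\kappa^2 b$, and trace its mapping properties. The decisive fact is that $V$ is smoothing by two orders: after extending its argument by zero to a compactly supported function on $\R^2$, $V$ becomes convolution with the Helmholtz fundamental solution, which lies in $W^{1,1}_{\rm loc}(\R^2)$; applying $\Delta$ to this convolution returns the argument up to sign and a lower‑order term, so interior elliptic regularity on $\R^2$ --- which sees nothing of $\pO$ --- yields $V:L^2(\Omega)\to H^2(\Omega)$ boundedly. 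Combining this with $M_{\kappa^2 b}$ bounded on $L^2(\Omega)$ (since $b\in L^\infty$), the trace $\gamma_0:H^2(\Omega)\to H^1(\pO)$ bounded on any bounded Lipschitz domain (via $H^2(\Omega)\hookrightarrow W^{1,q}(\Omega)$, $q<\infty$, in two dimensions), and $E:L^2(\pO)\to L^2(\Omega)$ bounded, one finds that $K$ maps $L^2(\pO)$ boundedly into $H^1(\pO)$. Since $\pO$ is a compact Lipschitz curve, the Rellich embedding $H^1(\pO)\hookrightarrow L^2(\pO)$ is compact, so $K:L^2(\pO)\to L^2(\pO)$ is compact and $A=I+K$ is of Fredholm second kind.

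The one nontrivial ingredient --- and the step I expect to be the main obstacle --- is the boundedness of the interior solution operator $E:L^2(\pO)\to L^2(\Omega)$, i.e.\ the estimate $\|u_h\|_{L^2(\Omega)}\lesssim\|h\|_{L^2(\pO)}$, on a merely Lipschitz (hence possibly cornered) domain. On a $C^{1,\alpha}$ or smooth boundary this is immediate and the theorem follows at once from the displayed identity; on a general Lipschitz $\Omega$ one must invoke the harmonic‑analysis theory of the $L^2$ Dirichlet problem on Lipschitz domains (Dahlberg, Jerison--Kenig, Verchota; see also \cite[Ch.~4]{mclean2000}) for the strongly elliptic operator $\Delta+\kappa^2(1-b)$, which is uniquely solvable with the attendant norm bound precisely because $\kappa$ is not a resonant wavenumber of $\Omega$. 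By contrast, the two structural ingredients --- the exact cancellation producing $A=I+\gamma_0 V(\kappa^2 b\,u)$, and the two‑derivative gain of the area potential --- are robust and dimension‑free, so the same argument would carry over to the $\R^3$ version of the method.
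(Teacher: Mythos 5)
Your proposal is correct and follows essentially the same route as the paper: Green's third identity with the volume potential, exact cancellation of the $D$ terms to get $A = I + \kappa^2\,\gamma_0 V B P$, and compactness of that remainder from the two-order smoothing of $\mathcal{V}$, the $L^2$ solvability of the interior Dirichlet problem on a Lipschitz domain, and a compact Sobolev embedding on $\pO$. The only (immaterial) difference is that the paper lands the trace in $H^{1/2}(\pO)$ --- avoiding your stronger claim that $\gamma_0:H^2(\Omega)\to H^1(\pO)$ on a Lipschitz boundary --- and then uses the compact embedding $H^{1/2}(\pO)\hookrightarrow L^2(\pO)$.
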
  
\begin{proof}  
Let $u$ satisfy \eqref{pdep} in $\Omega$, then
by Green's interior representation formula (third identity) \cite[Eq.~(2.4)]{coltonkress},
\be
u(\xx) = \left(\mathcal{S} u_n\right)(\xx) - \left(\mathcal{D}u|_\pO \right)(\xx)
- \kappa^2 (\mathcal{V}\,bu)(\xx)
\qquad\mbox{for}\ \xx\in \Omega
~,
\label{G3I}
\ee
where
$(\mathcal{V} \phi)(\xx) :=
\int_\Omega \frac{i}{4} H^{(1)}_0(\kappa|\xx-\yy|) \phi(\yy) d\yy$ denotes
the Helmholtz volume potential \cite[Sec.~8.2]{coltonkress} acting on a function $\phi$
with support in $\Omega$.
Define $P$ to be the solution operator for the interior Dirichlet problem
\eqref{pdep}-\eqref{dir}, i.e.\ $u(\yy) = (P u|_\pO)(\yy)$ for $\yy\in\Omega$,
and let $B$ denote the operator that multiplies a function pointwise by $b(\yy)$,
we can write the last term in \eqref{G3I} as $-\kappa^2 \mathcal{V} B P u|_\pO$.
Taking $\xx$ to $\pO$ from inside in \eqref{G3I}, the jump relations give
\be
\half u(\xx) = \left(S u_n\right)(\xx) - \left(D u|_\pO \right)(\xx)
- \kappa^2 (V B P u|_\pO)(\xx),
\qquad\xx\in \pO
~,
\label{eq:rewrite}
\ee
where $V$ is $\mathcal{V}$ restricted to evaluation on $\pO$.
Recall $u_n = T_{\rm int} u_\pO$.  Plugging this definition into (\ref{eq:rewrite}), we find
$$
ST_{\rm int} = \half I + D +  \kappa^2 V B P.
$$
When substituted into \eqref{A} results in cancellation of the $D$ terms, giving
$$
A = I + \kappa^2 V B P
~.
$$
Now $P: L^2(\pO)\to L^2(\Omega)$ is bounded \cite[Thm.~4.25]{mclean2000}, and $B$ is bounded.
$\mathcal{V}$ is two orders of smoothing: it is bounded from $L^2(\Omega)$ to
$H^2(W)$ for $W$ any bounded domain \cite[Thm.~8.2]{coltonkress}, and thus also bounded to $H^1(W)$.
While the Sobolev trace theorem has certain
restrictions on the order \cite[Thm.~3.38]{mclean2000} for $\pO$ Lipschitz,
the trace operator is bounded from $H^1(\Omega)$ to $H^{1/2}(\pO)$.
Thus $V:L^2(\Omega)\to H^{1/2}(\pO)$ is bounded.
Since $H^{1/2}(\pO)$ compactly imbeds into $L^2(\pO)$ on a Lipschitz boundary
\cite[Thm.~3.27 and p.99]{mclean2000},
the operator $VBP$ is compact and the proof complete.
\end{proof} 

Note that $D$ in \eqref{A} is {\em not compact} when $\pO$ has corners \cite[Sec.~3.5]{coltonkress},
yet the theorem holds with corners since $D$ is canceled in the proof.

Figure \ref{fig:spec} compares the spectrum of the unregularized (\ref{eq:intro}) and
regularized (\ref{eq:int2}) operators, in a simple computational example.
The improvement in the eigenvalue distribution is dramatic:
the spectrum of (\ref{eq:intro}) has
small eigenvalues but extends to large eigenvalues of order $10^5$,
while the spectrum of (\ref{eq:int2}) is tightly clustered around $+1$
with no eigenvalues of magnitude larger than 2.

\begin{figure}
\begin{center}
\begin{tabular}{ccc}
\setlength{\unitlength}{1mm}
\begin{picture}(60,55)
\put(-10,00){\includegraphics[height=55mm]{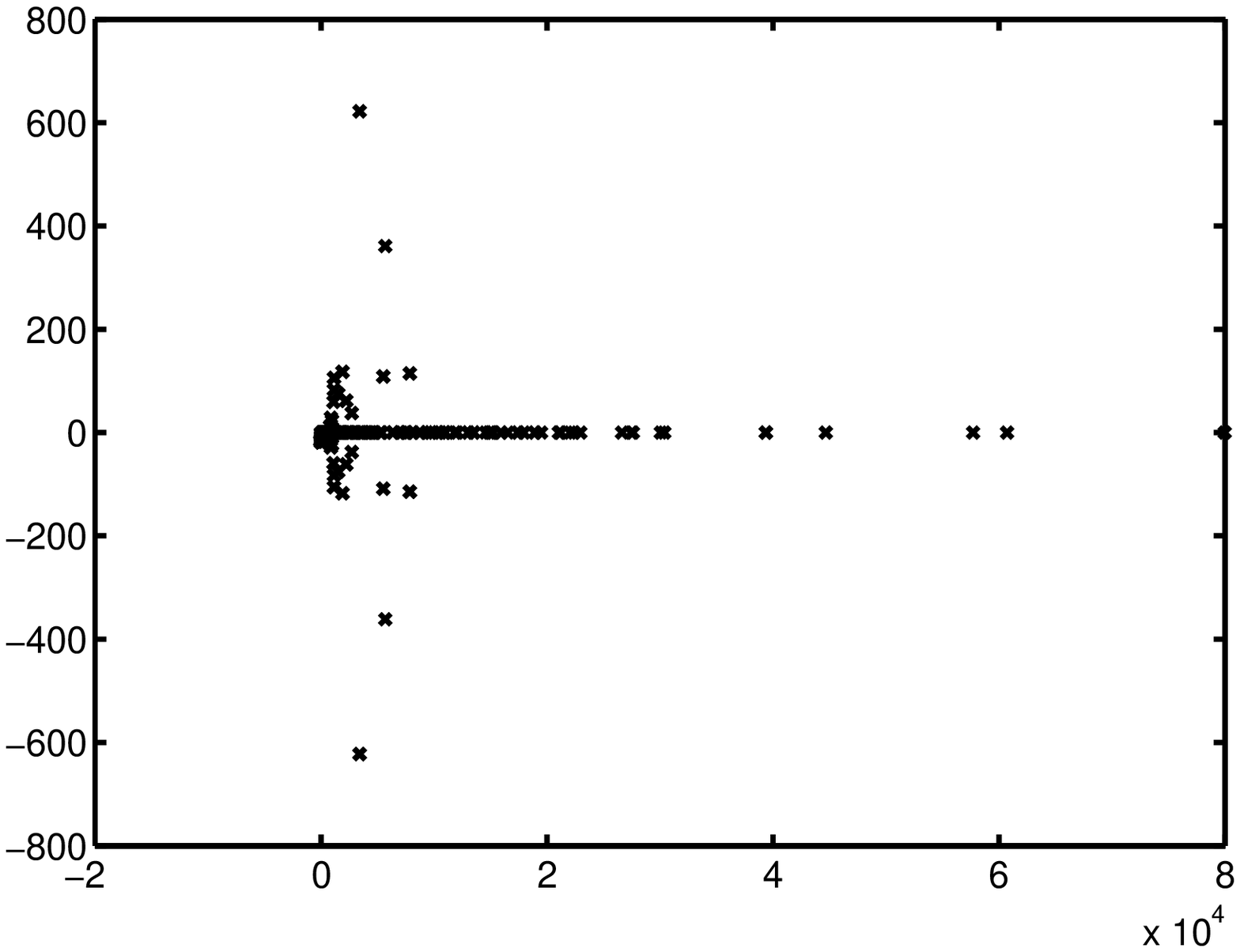}}
\put(-10,24){\rotatebox{90}{$imag(\lambda)$}}
\put(24,00){$real(\lambda)$}
\end{picture}
&  &
\setlength{\unitlength}{1mm}
\begin{picture}(60,55)
\put(-10,00){\includegraphics[height=55mm]{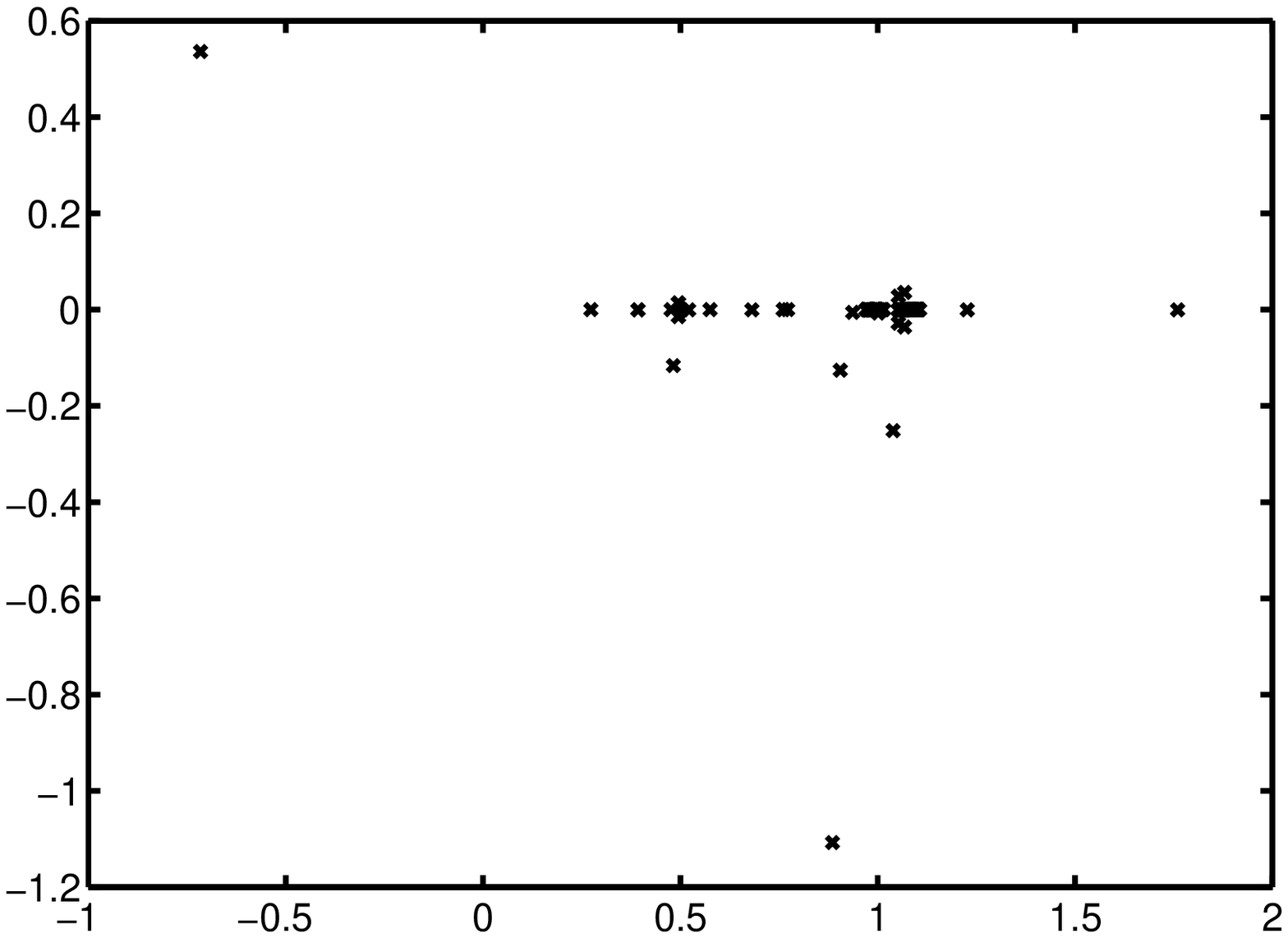}}
\put(-10,24){\rotatebox{90}{$imag(\lambda)$}}
\put(24,00){$real(\lambda)$}
\end{picture}\\
(a) &\mbox{}\hspace{20mm}\mbox{}& (b)
\end{tabular}
\end{center}
\vspace{-1ex}
\caption{Eigenvalues of the discretized operators plotted in the complex plane.
(a) shows the operator $T_{\rm int}-T_{\rm ext}$ from \eqref{eq:intro}, whilst
(b) shows the new regularized operator $A$ from (\ref{eq:int2}).
In both cases $\Omega = (-0.5,0.5)^2$, $\kappa = 20$ and $b(\xx) = -1.5e^{-160|\xx|^2}$.}
\label{fig:spec}
\end{figure}

\subsection{Reconstructing the scattered field on the exterior}
\label{sec:exterior}
Once equation (\ref{eq:int2}) is solved for the scattered wave on $\pO$, the scattered
wave can be found at any point in $\Omega^c$ via the representation (\ref{eq:greens}).
All that is needed is the normal derivative of $\us$ on $\pO$, which
from equation (\ref{eq:bdry1}) is found to be
\begin{equation}
\us_n = T_{\rm int} \left(\ui|_\pO +\us|_\pO\right) -  \ui_n
~.
\label{eq:normw}
\end{equation}
For evaluation of (\ref{eq:greens}),
the native Nystr\"om quadrature on $\pO$ is sufficient for 10-digit accuracy for
all points further away from $\Omega$ than the size of one leaf box;
however, as with any boundary integral method,
for highly accurate evaluation very close to $\pO$ a modified quadrature would be needed.

\subsection{Numerical discretization of the boundary integral equation}
We discretize the BIE (\ref{eq:int2}) on $\pO$ via a Nystr\"om method
with composite (panel-based) quadrature with $n$ nodes in total.
The panels on $\pO$ coincide with the edges of the
leaf boxes from the interior discretization, apart from the eight panels touching corners,
where six levels of dyadic panel refinement are used on each to achieve around 10-digit
accuracy.\footnote{We note that some refinement is necessary even though the solution
is smooth near the (fictitious) corners. However, the extra cost of refinement, as opposed to, say, local corner rounding, is negligible.}
On each of these panels, a 10-point Gaussian rule is used.

For building $n\times n$ matrix approximations to the operators $S$ and $D$ in \eqref{eq:int2},
the plain Nystr\"om method is used for matrix elements corresponding to non-neighboring panels,
while generalized Gaussian quadrature for matrix elements
corresponding to the self- or neighbor-interaction of each panel \cite{gen_quad}.
The matrix $\mtx{T}_{\rm int}$ computed by \eqref{eq:getTback} must also be interpolated
from the $4\Ng2^M$ Gauss nodes on $\pO$ to the $n$ new nodes; since the
panels mostly coincide, this is a local operation analogous to the use
of $\mtx{P}$ and $\mtx{Q}$ matrices in section~\ref{sec:box}.

\section{Computational complexity}
\label{sec:cost}

The computational cost of the solution technique is determined by the cost of constructing the approximate
DtN operator $\dtnm_{\rm int}$ and the cost of solving the boundary integral equation (\ref{eq:int2}).  Let $N$ denote
the total number of discretization points in $\Omega$ required for constructing $\itim$.  As there are $\Nc^2$ Chebyshev points for each
leaf box, the total number of discretization points is roughly $4^M\Nc^2$ (to be precise,
since points are shared on leaf box edges, it is
$N = 4^M(\Nc-1)^2+2^{M+1}(\Nc-1)+1$).   Recall that $n$ is the number of points on
$\pO$ required to solve the integral equation.  Note that $n \sim \sqrt{N}$.


\subsection{Using dense linear algebra}
\label{sec:N1.5}
Using dense linear algebra, the cost of constructing $\itim$ via the technique in section \ref{sec:spec}
is dominated by the cost at the top level where a matrix of size
$\sqrt{N}\times \sqrt{N}$ is inverted.  Thus the computational cost is $O(N^{3/2})$.  The cost
of approximating the DtN operator $\dtnm_{\rm int}$ is also $O(N^{3/2})$.  However,
the computational cost of applying $\dtnm_{\rm int}$ is $O(N)$.  If the solution
in the interior of $\Omega$ is desired, the computation cost of the solve is $O(N)$ as well.

The cost of inverting the linear system resulting from the (eg. Nystr\"om) discretization of (\ref{eq:int2}) is $O(n^3)$.
It is possible to accelerate the solve by using iterative methods such as GMRES,
which, given its second kind nature, would converge in $O(1)$ iterations.

When there are multiple incident waves at the same wavenumber $\kappa$, the solution technique should be separated into
two steps: precomputation and solve.  The precomputation step consist of constructing the approximate ItI, and DtN operators
$\itim$ and $\dtnm_{\rm int}$, respectively.  Also included in the precomputation should
be the discretization and inversion of the BIE (\ref{eq:int2}).  The solve step then consists of applying the
inverse of the system in (\ref{eq:int2}).  The precomputation need only be done once per wavenumber with
a computational cost $O(N^{3/2})$.  The cost of each solve (one for each incident wave) is simply the cost
of applying an $n\times n$ dense matrix $\sim O(N)$.

\subsection{Using fast algorithms}
\label{sec:N}
The matrices $\mtx{R}^{\tau}$ in Algorithm 1 that approximate ItI operators, as well as the matrices
$\mtx{T}_{\rm int}$ and $\mtx{T}_{\rm ext}$ approximating DtN operators, all have internal structure
that could be exploited to accelerate the matrix algebra. Specifically, the off-diagonal blocks of these
matrices tend to have low numerical ranks, which means that they can be represented efficiently in so
called ``data-sparse'' formats such as, e.g., $\mathcal{H}$ or $\mathcal{H}^{2}$-matrices \cite{hackbusch,2010_borm_book,2008_bebendorf_book},
or, even better, the Hierarchically Block Separable (HBS) format
\cite{2012_martinsson_FDS_survey,2012_ho_greengard_fastdirect}
(which is closely related to the ``HSS'' format \cite{2010_gu_xia_HSS}).
If the wavenumber $\kappa$ is kept fixed as $N$ increases, it turns out to be possible to accelerate
all computations in the build stage to optimal $O(N)$ asymptotic complexity, and the solve stage
to optimal $O(N^{1/2})$ complexity, see  \cite{ONspectralcomposite}.
However, the scaling constants suppressed by the big-O notation depend on $\kappa$ in such a way
that the use of accelerated matrix algebra is worthwhile primarily for problems of only moderate size (say a few
dozen wavelengths across). Moreover, for high-order methods such as ours, it is common to keep
the number of discretization nodes per wavelength fixed as $N$ increases (so that $\kappa \sim N^{1/2}$),
and in this environment, the scaling of the ``accelerated'' methods revert to $O(N^{3/2})$ and $O(N)$ for the build and the
solve stages, respectively.



\section{Numerical experiments}
\label{sec:numerics}
This section reports on the performance of the new solution technique
for several choices of potential $b(\xx)$ where the (numerical) support of $b$ is contained
in $\Omega = (-0.5,0.5)^2$.  The incident wave is
a plane wave $\ui(\xx) = e^{i\kappa \www\cdot\xx}$ with incident unit direction vector $\www\in\R^2$.

Firstly, in section \ref{sec:acc} the method is applied to
problems where $b(\xx)$ is a single Gaussian ``bump.''
In this case the radial symmetry
allows for an independent semi-analytic solution,
which we use to verify the accuracy of the method.
Then section \ref{sec:conv} reports on the performance of the method when
applied to more complicated problems.  Finally, section \ref{sec:time}
illustrates the computational cost in practice.

For all the experiments, for the composite spectral method described in section \ref{sec:spec}
we use on each leaf a $\Nc\times\Nc$ Chebyshev tensor product grid with $\Nc=16$,
and the number of Gaussian nodes per side of a leaf is $\Ng=14$.

We implemented the methods based on dense matrix algebra with $O(N^{3/2})$ asymptotic
complexity described in section \ref{sec:N1.5}. (We do not use the $O(N)$ accelerated
techniques of section \ref{sec:N} since we are primarily interested in scatterers that
are large in comparison to the wavelength.)

All experiments were executed on a desktop
workstation with two quad-core Intel Xeon E5-2643 processors and 128 GB
of RAM. All computations were done in MATLAB (version 2012b),
apart from the evaluation of Hankel functions in the Nystr\"om and scattered wave calculations,
which use Fortran.
We expect that, careful implementation of the whole scheme in a compiled language can improve
execution times substantially.

\subsection{Accuracy of the method}
\label{sec:acc}
In this section, we consider problems where the scattering potential $b(\xx)$ is given by a Gaussian bump.
Since $b$ has radial symmetry, we may compute an accurate reference
scattering solution by solving a series of ODEs, as explained in
Appendix~\ref{s:ref}.
With $\kappa = 40$ (so that the square $\Omega$ is around six free-space wavelengths on a side),
and $\www  = (1,0)$, we consider two problems given as follows,

\vspace{.5ex}
\begin{tabular}{rl}
\emph{Bump 1}: &  $b(\xx) = -1.5e^{-160 |\xx|^2}$,\\
\emph{Bump 2}: &  $b(\xx) = 1.5e^{-160 |\xx|^2}$.\\
\end{tabular}
\vspace{.5ex}

For \emph{Bump 1}, the bump region has an increased refractive index,
varying from 1 to around 1.58, which can be interpreted as an attractive potential.
For \emph{Bump 2}, the potential is repulsive, causing the waves to become slightly evanescent
near the origin (here the refractive index decreases to zero then becomes purely imaginary,
but note that this does not correspond to absorption.)
Figure \ref{fig:bumps} illustrates
the geometry and the resulting real part of the total field for each experiment.

\begin{figure}[ht]
 \begin{tabular}{ccc}
\includegraphics[height=45mm]{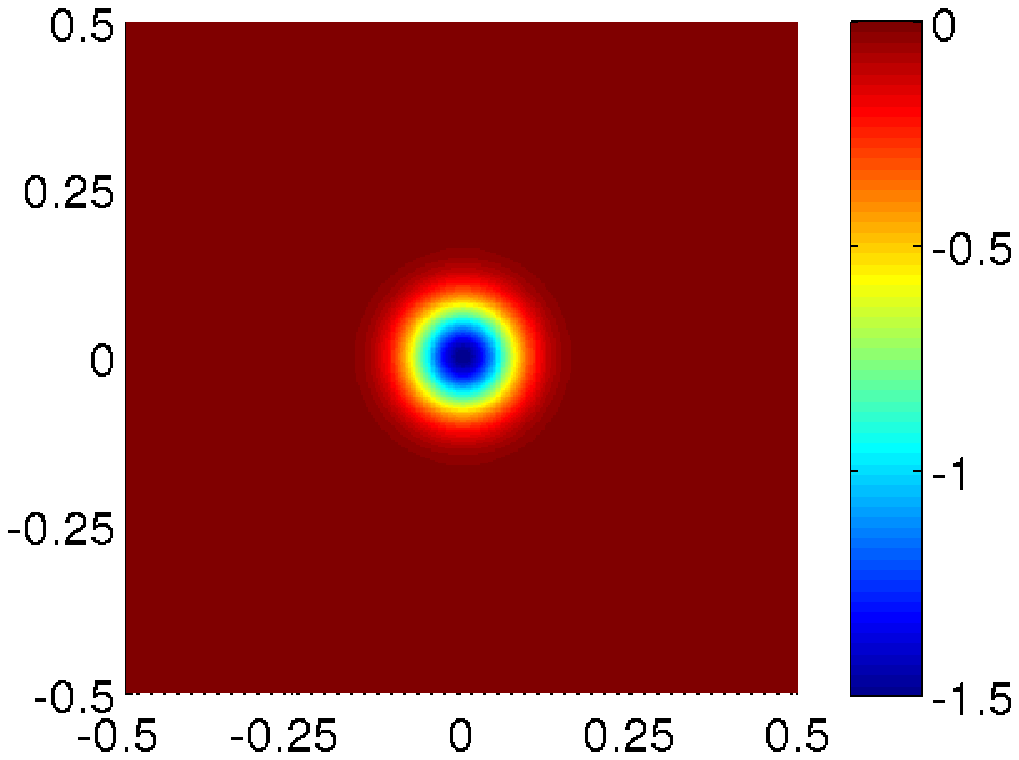}&\mbox{}& \includegraphics[height=45mm]{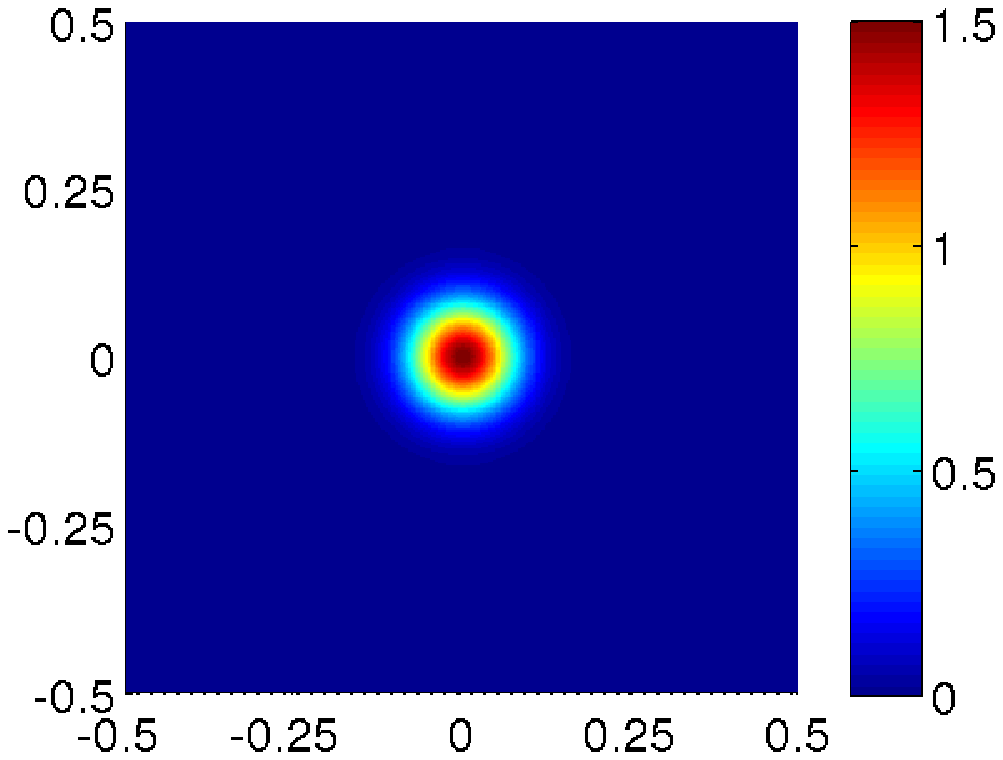}\\
(a)& \mbox{}& (b)\\
 \includegraphics[height=45mm]{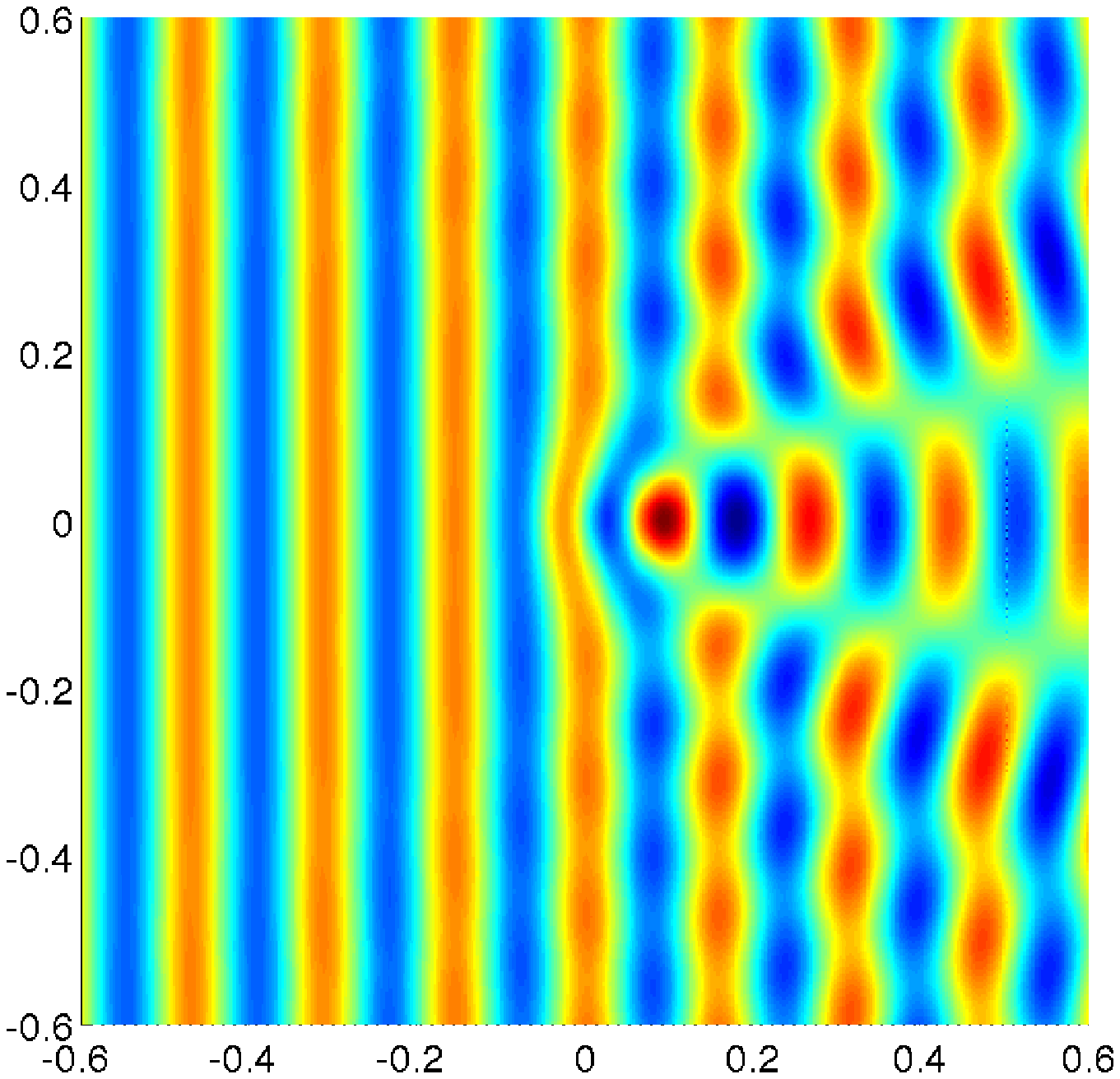}&\mbox{}& \includegraphics[height=45mm]{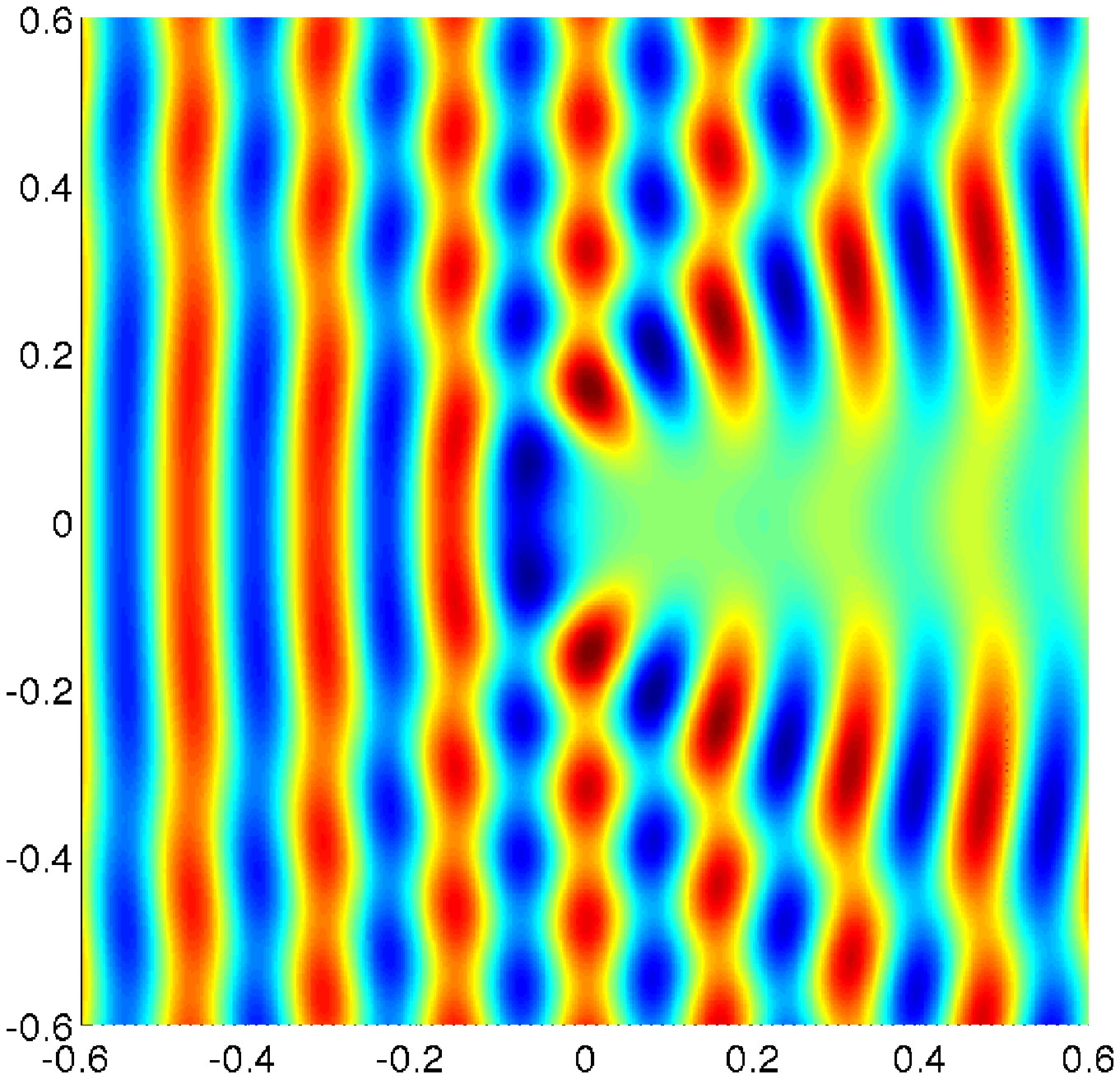}\\
(c)& \mbox{}& (d)\\
\end{tabular}
\caption{\label{fig:bumps} Plots (a) and (b) illustrate $b(\xx)$ for experiments \emph{Bumps 1} and \emph{Bumps 2} in section \ref{sec:acc}.  Plots (c) and (d)
illustrate the real part of the total field for each experiment respectively. }
\end{figure}

Let $\tilde{u}$ denote the approximate total field constructed via the proposed method,
and $u$ denote the reference total field computed as in Appendix~A.
Table \ref{tab:bumps} reports

\vspace{.5ex}
\begin{tabular}{rl}
 $N$: & the number of discretization points used by the composite spectral method in $\Omega$, \\
$n$:& the number of discretization points used for discretizing the BIE,  \\
$\re \tilde{u}(0.5,0)$:& real part of the approximate solution at $(0.5,0)$ (on $\pO$),\\
 $e_1$:& $=|u(0.5,0)-\tilde{u}(0.5,0)|$,\\
$\re \tilde{u}(1,0.5)$:& real part of the approximate solution at $(1,0.5)$ (outside of $\Omega$),\\
$e_2$:&   $=|u(1,0.5)-\tilde{u}(1,0.5)|$.
\end{tabular}
\vspace{.5ex}

In the table, the number of levels $M$ grows from 2 to 5, roughly quadrupling $N$ each time.
High-order convergence is apparent, reaching an accuracy of 9-10 digits
(accuracy does not increase much beyond 10 digits).
At the highest $N$, there are about 50 gridpoints per wavelength at the shortest
wavelength occurring at the center of {\em Bump 1}.

\begin{table}[ht]
 \begin{tabular}{|c|c|c|c|c|c|c|}
\hline
 & $N$& $n$&$\re \tilde{u}(0.5,0)$ &$e_1$ &$\re \tilde{u}(1,0.5)$ &$e_2$ \\ \hline
\multirow{4}{*}{\emph{Bump 1}} &3721&640&-0.98792561833285&7.75e-05&-1.12207402682737&5.09e-05 \\
&14641&800&-0.987981264965721&1.78e-07&-1.12205758254400&8.18e-08 \\
&58081&1120&-0.987981217277174&2.56e-09&-1.12205766387011&1.15e-10\\
&231361&1760&-0.987981215350216&9.31e-10&-1.12205766378840&7.90e-11\\ \hline
\multirow{4}{*}{\emph{Bump 2}} &3721&640&-0.0470314782486572&4.82e-05&-1.01063677552351&3.23e-05\\
&14641&800&-0.0470619180279044&	4.25e-08&-1.01065022958956&7.32e-08\\
&58081&1120&-0.0470619010992819&1.32e-09&-1.01065028579517&1.29e-10\\
&231361&1760& -0.0470619007119554& 5.07e-10&  -1.01065028569638&4.36e-11 \\ \hline
  \end{tabular}
\vspace{2ex}
\caption{\label{tab:bumps} Approximate solutions and pointwise errors for the experiments in section \ref{sec:acc}}
\end{table}

\begin{figure}[ht]
 \begin{tabular}{ccc}
\includegraphics[height=50mm]{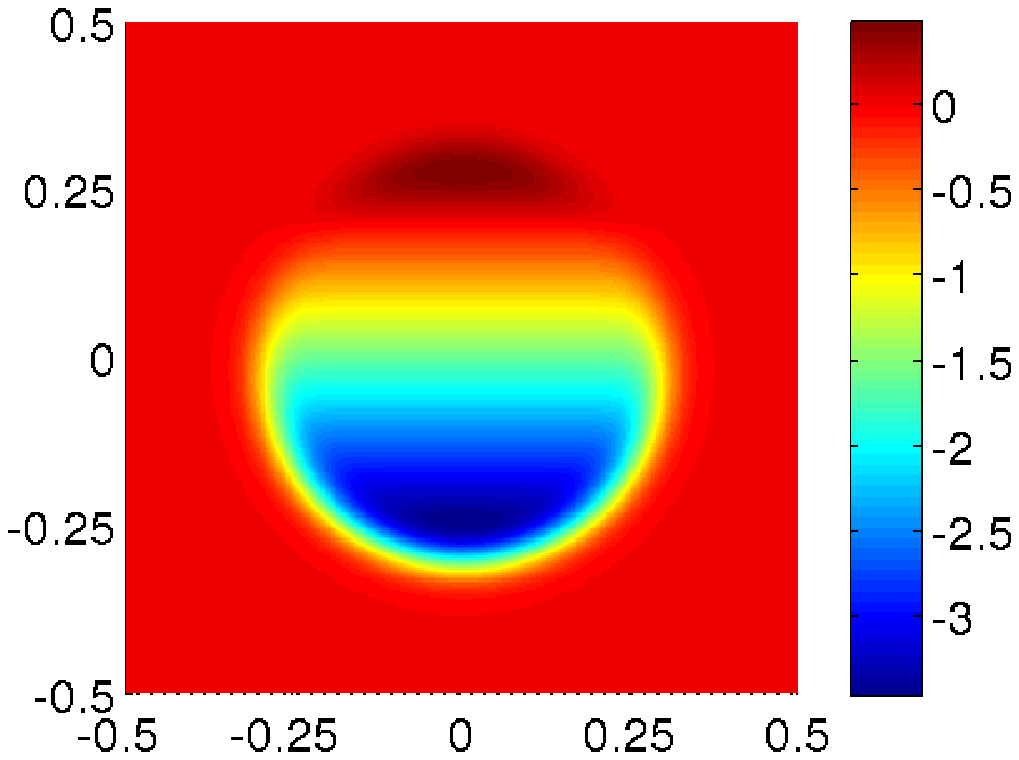}& \includegraphics[height=50mm]{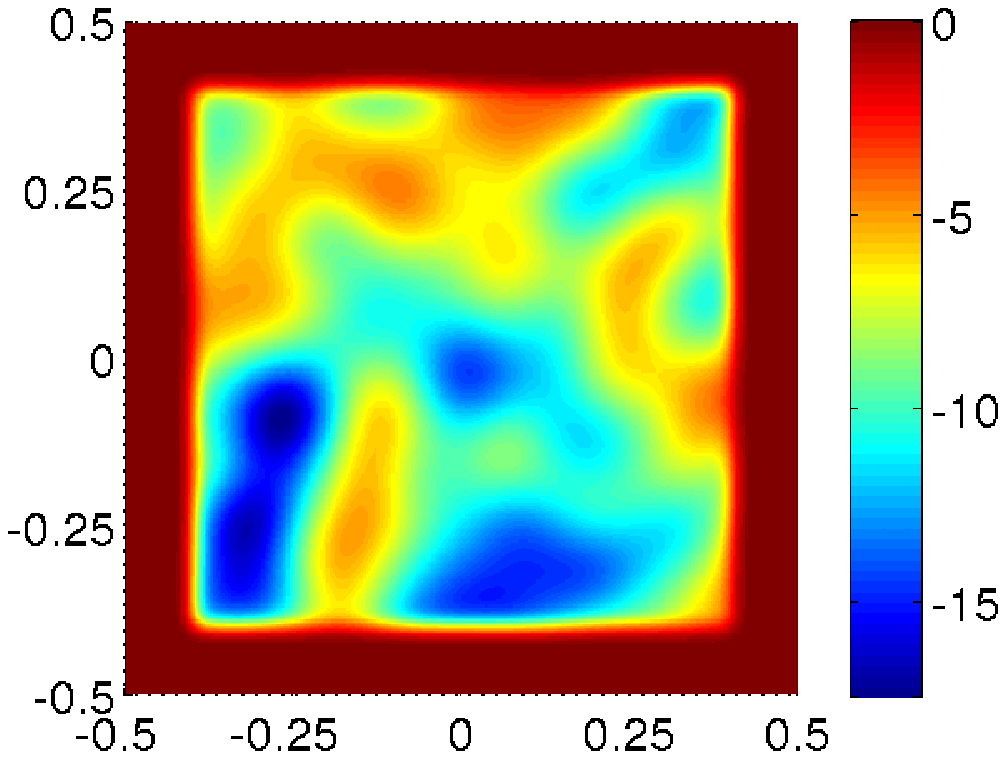}\\
(a)& (b)\\
\multicolumn{2}{c}{\includegraphics[height=50mm]{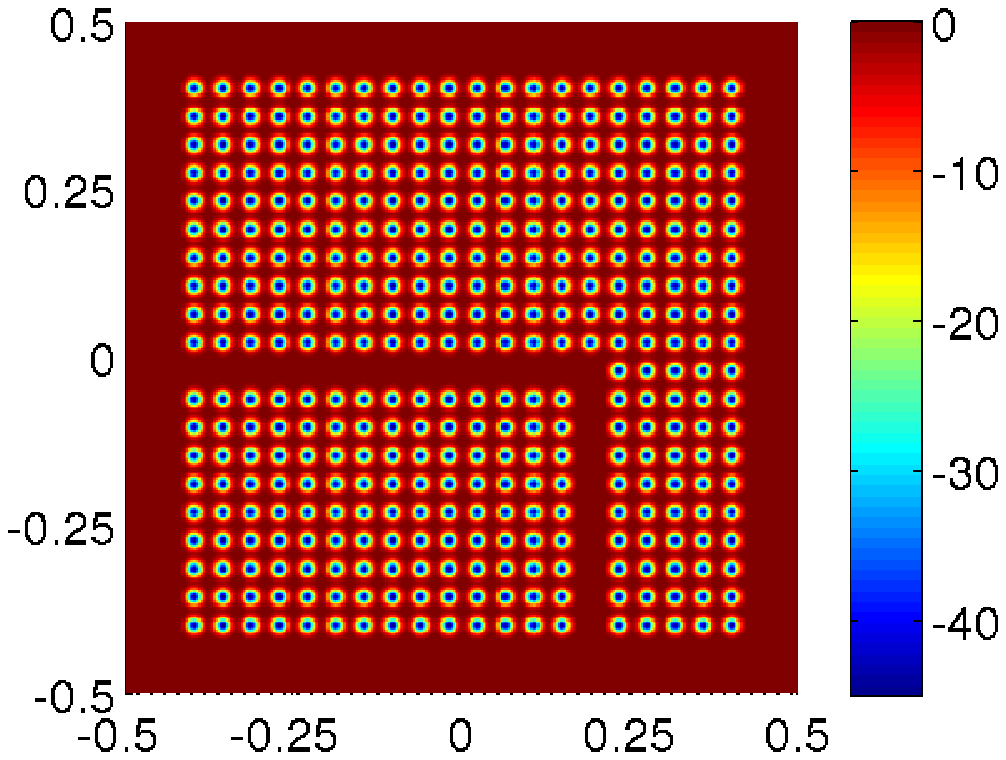}}\\
\multicolumn{2}{c}{(c)}\\
   \end{tabular}
\caption{\label{fig:bs} Plots of the different scattering potentials $b$
inside $\Omega$ in section \ref{sec:conv}.}
\end{figure}

\subsection{Performance for challenging scattering potentials}
\label{sec:conv}
This section illustrates the performance of the numerical method for
problems with smoothly varying wave speed inside of $\Omega$.

We consider three different test cases of scattering potential $b$.  They are

\vspace{.5ex}
\hspace{-5ex}
\begin{tabular}{rl}
 \emph{Lens}: & A vertically-graded lens (Figure \ref{fig:bs}(a)), at wavenumber $\kappa=300$.\\
 & Specifically, $b(\xx) = 4(x_2-0.2)[1-{\rm erf}(25(|\xx|-0.3))]$, where $\xx=(x_1,x_2)$.  \\
& The maximum refractive index is around 2.1\\
\emph{Random bumps}: & The sum of $200$ wide Gaussian bumps randomly placed in $\Omega$,
rolled off to zero\\
&(see Figure \ref{fig:bs}(b))
giving a smooth random potential at wavenumber $\kappa=160$.\\
& The maximum refractive index is around 4.3.
\\
\emph{Photonic crystal}:& $20\times 20$ square array of small Gaussian bumps
(with peak refractive index 6.7) \\
& with a ``waveguide'' channel removed (Figure \ref{fig:bs}(c)).
The wavenumber $\kappa = 77.1$ \\
&is chosen carefully to lie in the first complete bandgap of the crystal.
\end{tabular}
\vspace{.5ex}

For the first two cases, $\Omega$ is around 70 wavelengths on a side, measured using the typical
wavelength occurring in the medium
(for the {\em lens} case, it is 100 wavelengths on a side at the minimum wavelength).
This is quite a high frequency for a variable-medium problem at the accuracies we achieve.
In these two cases the waves mostly {\em propagate}; in contrast, in the third case the waves
mostly {\em resonate} within each small bump,
in such a way that large-scale propagation through the crystal is impossible
(hence evanescent), except in the channel.\footnote{The choice of bump height and width
needs to be made carefully to ensure that a usable bandgap exists;
this was done by creating a separate spectral solver for the band structure of the periodic problem.}

For each choice of varying wave speed, the incident wave is in the direction $\www  = (1,0)$
(we remind the reader that the method works for arbitrary incident direction).
For the \emph{photonic crystal}, we also consider the incident wave direction
$\www = (-\sqrt{2}/2,\sqrt{2}/2)$.
There are no reference solutions available for these problems, hence we study convergence.

In addition to the number of discretization points $N$ and $n$, Table \ref{tab:vary} reports

\vspace{.5ex}
\begin{tabular}{rl}
$\re\tilde{u}(0.25,0)$:& real part of the approximate solution at $(0.25,0)$ (inside of $\Omega$),\\
 $e_1$:& $=|\tilde{u}_N(0.25,0)-\tilde{u}_{4N}(0.25,0)|$, an estimate of the pointwise error,\\
$\re\tilde{u}(1,0.5)$:& real part of the approximate solution at $(1,0.5)$ (outside of $\Omega$),\\
$e_2$:&   $=|\tilde{u}_N(1,0.5)-\tilde{u}_{4N}(1,0.5)|$, an estimate of the pointwise error.\\
\end{tabular}
\vspace{.5ex}

\begin{table}[ht]
 \begin{tabular}{|c|c|c|c|c|c|c|c|}
\hline
&  $N$& $n$&  $\re\tilde{u}(0.5,0)$ &$e_1$ &$\re\tilde{u}(1,0.5)$ &$e_2$\\ \hline
\multirow{5}{*}{Lens}
		    &58081&1120&-0.373405022283892&	2.02e-01	&-0.547735180732198&5.09e-01\\
		    &231361&1760& -0.221345395796661&3.53e-03	&0.161212542340161& 2.86e-03	\\
		    &923521&3040&-0.218651605400620&1.61e-07	& 0.158422280450864& 1.87e-07\\
		    &3690241& 5600&  -0.218651458554288& 6.85e-10	&0.158422464920298&	6.99e-10\\
		    &14753281& 10720&  -0.218651458391577& - & 0.158422464625727 & - \\
\hline
\multirow{5}{*}{Bumps}
		    &58081&1120&1.29105948477323 &     1.91         &-0.612141744074168 &0.44 	\\
		    &231361&1760&0.359271869087464& 	1.99e-02	&-0.931198083868205& 1.87e-02\\
		    &923521&3040&0.374697595070227&	2.76e-06&-0.945752835546445& 7.95e-07\\
		    &3690241& 5600&0.374698518812982&	3.06e-09&-0.945753626496863& 1.58e-09 \\
		    &14753281& 10720& 0.374698518930658& -	&-0.945753627849080 &- \\ \hline

\multirow{5}{*}{Crystal}
		    &58081&1120&-0.406418011063883 &2.03 & -0.129067996215635& 3.42e-01\\
		    &231361&1760&0.0424527158875615&1.53e-03 &0.195870563479998 &1.82e-4\\
		    &923521&3040&0.0437392735790711&7.30e-07	&0.195981633749759& 2.81e-07\\
		    &3690241& 5600&0.0437393320806644&4.84e-10 & 0.195981570696692&  7.56e-10\\
		    &14753281& 10720&0.0437393324622741&- & 0.195981570519668& -\\ \hline
\multirow{2}{*}{Crystal}
		    &58081&1120&-0.0420633119821246 	 &1.28 & -1.20915538109562	& 1.37e-01\\
&231361&1760&0.0367128964251903	&3.47e-03 &-1.09529393341122	 &1.10e-3\\
\multirow{3}{*}{ $\www = \left(-\frac{\sqrt{2}}{2},\frac{\sqrt{2}}{2}\right)$}
&923521&3040&0.0376839447575234	&1.58e-06&-1.09445429347097	& 7.25e-07\\
		    &3690241& 5600&0.0376833752064704	&6.79e-10 &-1.09445431363369 	&  4.90e-9\\
		    &14753281& 10720&0.0376833752704930	&- &-1.09445430874556	& -\\ \hline
 \end{tabular}
\vspace{1ex}
\caption{\label{tab:vary} Convergence results for the experiments in section \ref{sec:conv}.}
\end{table}

Table~\ref{tab:vary} shows that typically 9-digit accuracy is reached
when $N \approx 3.7\times 10^6$ ($M=7$),
which corresponds to 1921 Chebyshev nodes in each direction,
or around 20 nodes per wavelength at the shortest wavelengths in each medium.

\begin{figure}[ht]
\hspace{-5ex}
 \begin{tabular}{ccc}
\includegraphics[height=60mm]{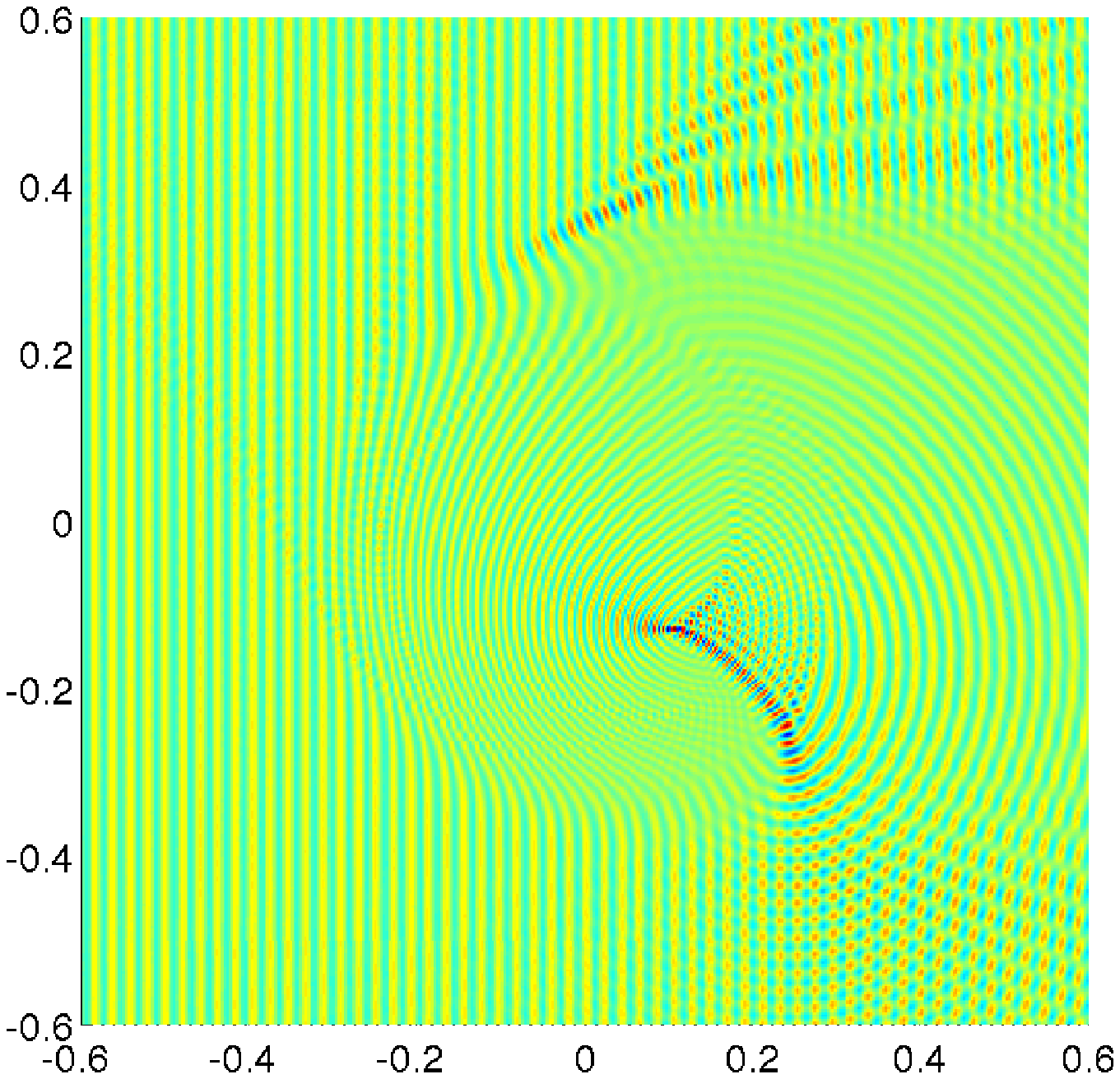}&\mbox{}& \includegraphics[height=60mm]{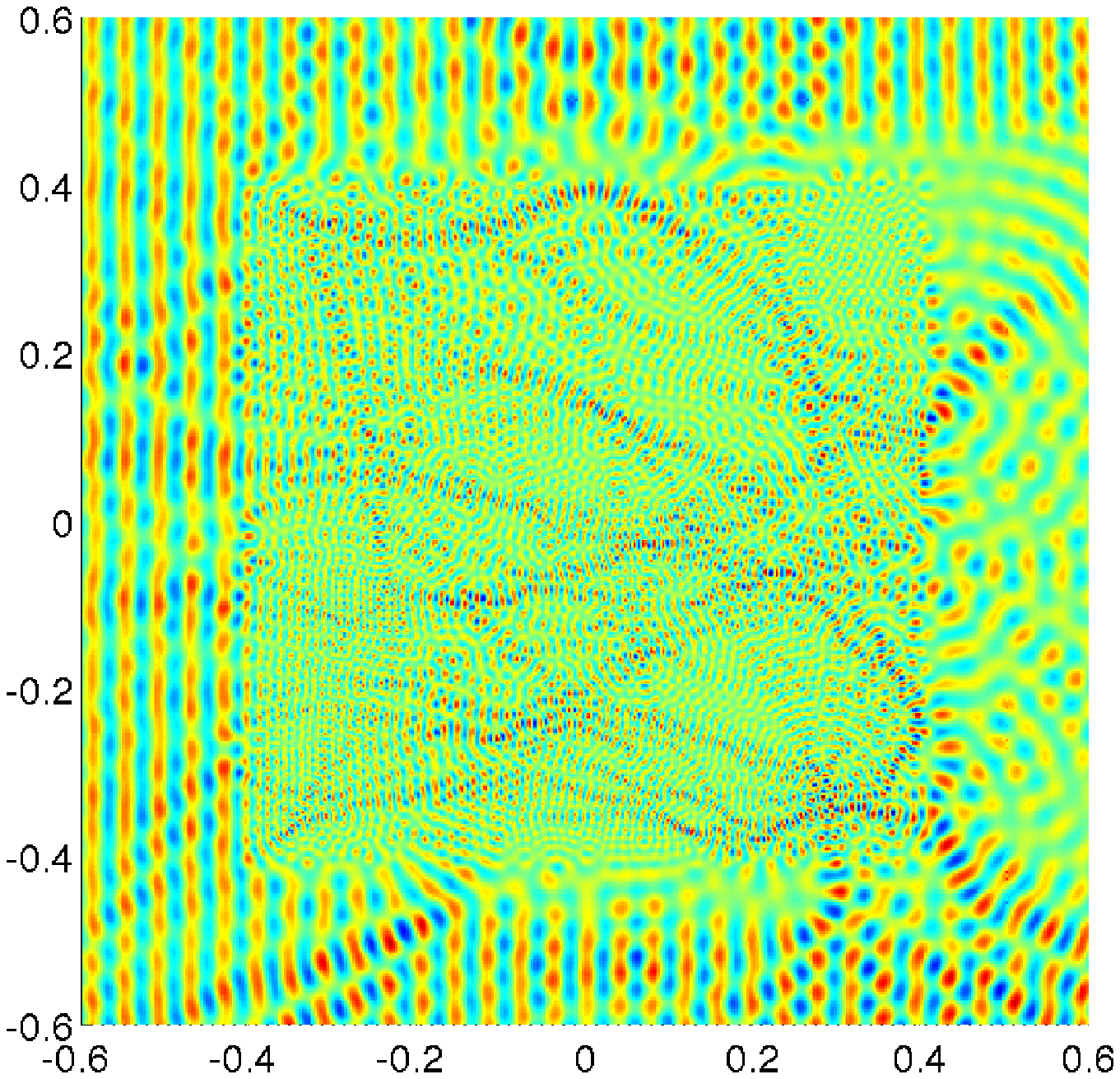}\\
(a)& \mbox{}& (b)\\
\includegraphics[height=60mm]{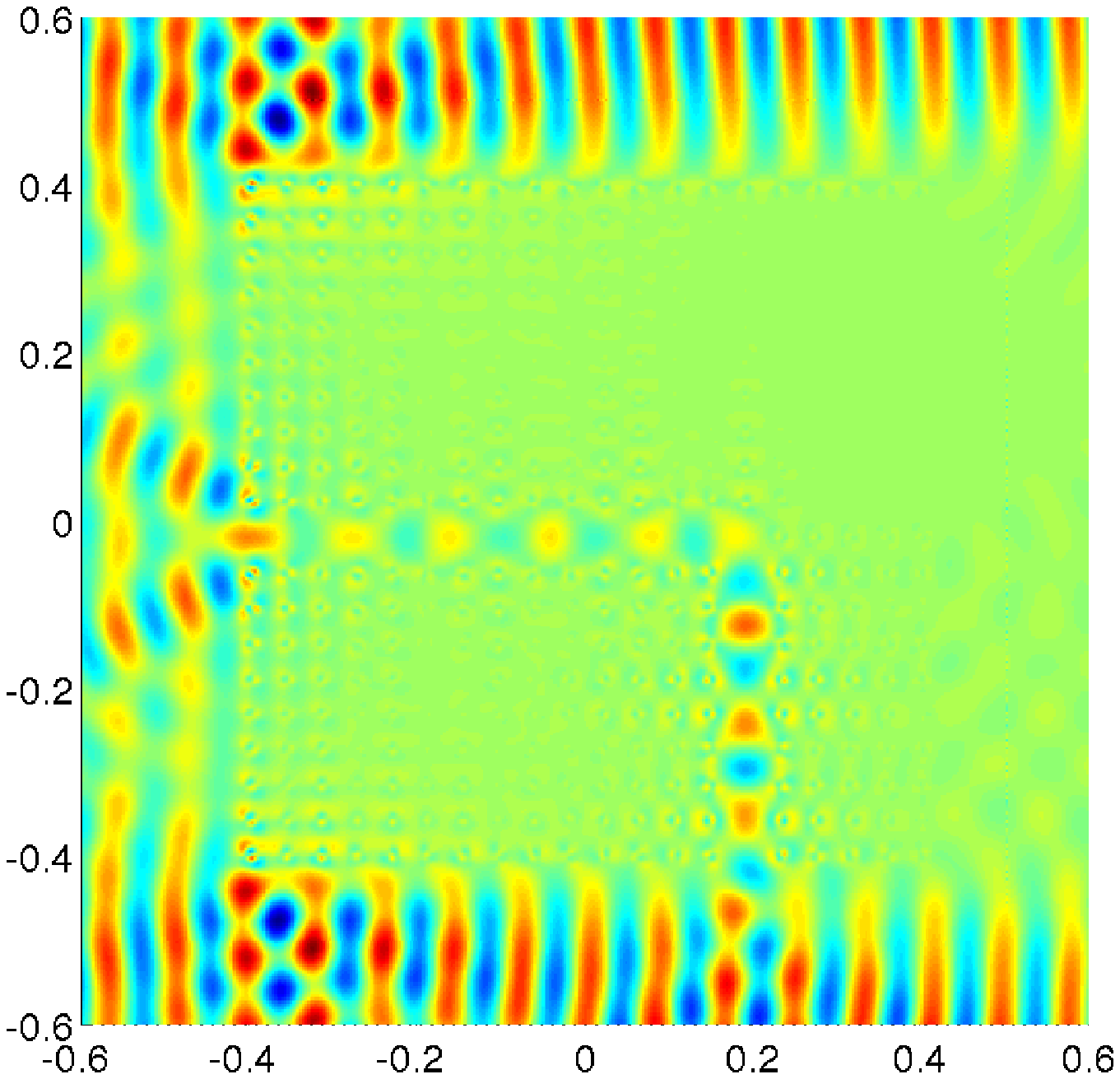}& \mbox{}& \includegraphics[height=60mm]{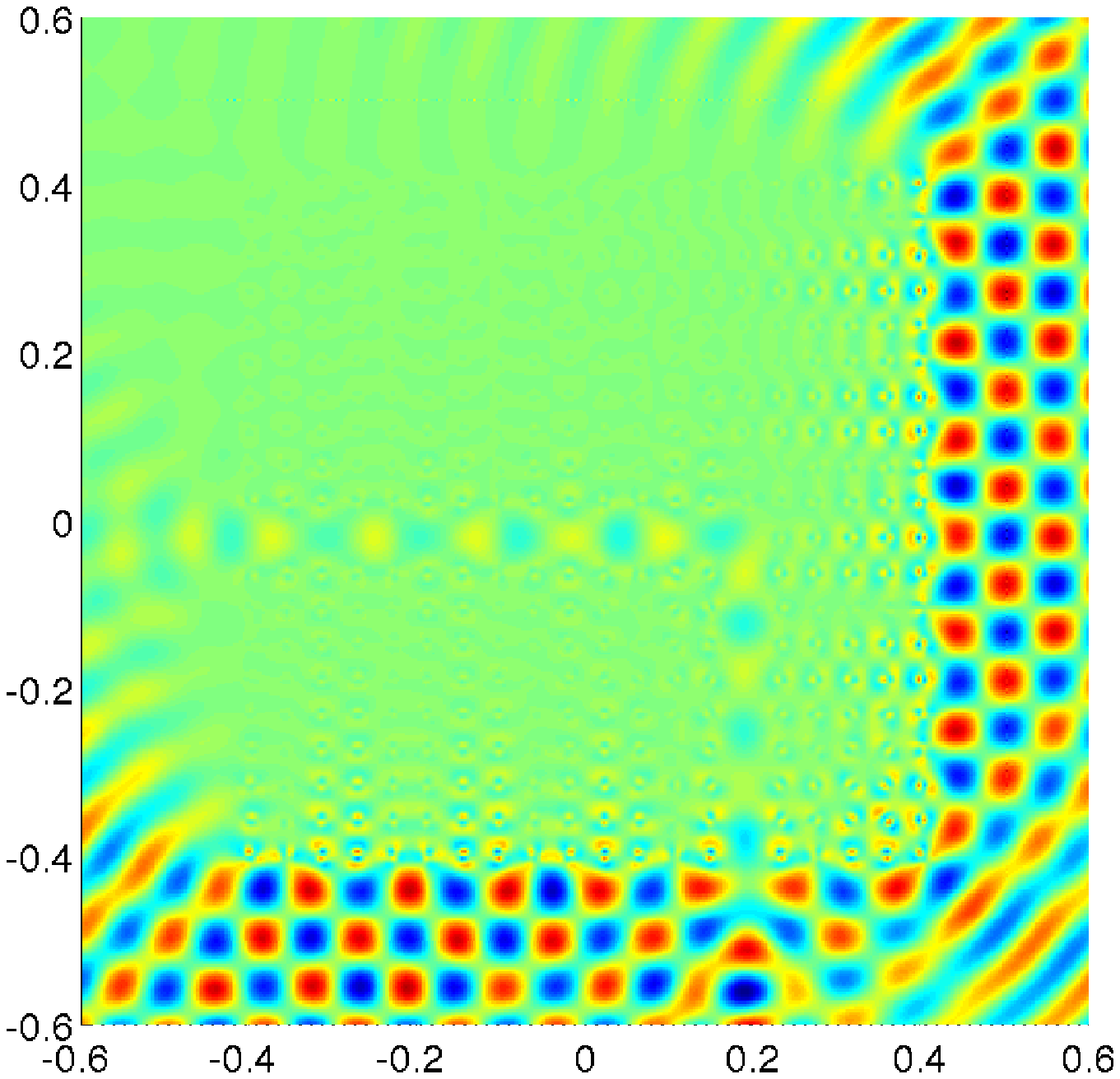}\\
(c) & \mbox{}& (d) \\
   \end{tabular}
\caption{\label{fig:fields}%
Plots of the real part of the total field for the four experiments in section~\ref{sec:conv}
whose scattering potentials are shown in Figure~\ref{fig:bs}:
(a) lens, (b) random bumps, (c) and (d) photonic crystal with different incident wave directions.}
\end{figure}

\subsection{Scaling of the method}
\label{sec:time}
Recall that in the case of multiple incident waves, the solution technique
should be broken into two steps: precomputation and solve.  Since a direct
solver is used, the timing results are independent of the particular scattering potential.

For each choice of $N$ and $n$, Table \ref{tab:times}
reports

\vspace{.5ex}
\begin{tabular}{rl}
$T_{\rm build}$:& Time in seconds to building the approximate ItI and DtN,\\
$T_{\rm solve}$:& Time in seconds to discretize and invert the BIE (\ref{eq:int2}),\\
$T_{\rm apply}$:& Time in seconds to apply the inverse $A^{-1}$
of the discretized integral equation\\
$R_{\rm build}$: & Memory in MB required to store the ItI and solution operators in
the hierarchical scheme,\\
$R_{\rm solve}$: & Memory in MB required to store the discretized inverse $A^{-1}$.
\end{tabular}
\vspace{.5ex}

Figure \ref{fig:times} plots the timings against the problem size $N$.
(The total precomputation time is the sum of $T_{\rm build}$ and $T_{\rm solve}$.)
The results show that even at the largest $N$ tested, the
precomputation time has not reached its asymptotic $O(N^{3/2})$;
the large dense linear algebra has not yet started to dominate $T_{\rm build}$
(this may be due to MATLAB overheads).
However, the cost for solving (\ref{eq:int2}) and applying the inverse scale
closer to expectations.

The memory usage scales as the expected $O(N \log N)$.
We are not able to test beyond 15 million unknowns ($M=8$)
since by that point the memory usage approaches 100 GB.
However, note that if all that is needed is the {\em far-field} solution for
arbitrary incident waves at one wavenumber, the $\mtx{S}^\tau$ and $\mtx{Y}^\tau$ solution
matrices need not be stored, reducing memory significantly, and the final solution
matrix only requires 2 GB.
We note that, extrapolating from the convergence study, this $N$ should be
sufficient for 9-digit accuracy for problems up to 200 wavelengths on a side.

\begin{table}[ht]
\centering
 \begin{tabular}{|c|c|c|c|c|c|c|}
\hline
$N$& $n$ &$T_{\rm build}$ & $T_{\rm solve} $ &$T_{\rm apply}$ & $R_{\rm build}$ & $R_{\rm solve}$\\ \hline
3721&	640&	0.506&	1.78&	5.39e-04 & 9.71 & 6.25\\ 
14641&	800&	0.709&	2.01&	8.28e-04 & 48.07 & 9.77\\ 
58081&	1120&	2.90&	3.01&	1.73e-03 & 229.05& 19.14\\ 
231361&	1760&	12.09&	5.40&	3.32e-03 & 1063.23& 47.37\\ 
923521&	3040&	51.67&	13.23&	1.05e-02 & 4841.01& 141.02\\ 
3690241&5600&	231.18&	40.79&	4.03e-02 & 21716.21& 478.52\\ 
14753281&10720&	1081.09&185.54&	1.13e-01& 96273.17&1753.52\\ \hline
 \end{tabular}
\vspace{2ex}
\caption{\label{tab:times}$T_{\rm build}$ and $R_{\rm build}$ report the time in seconds and memory in MB, respectively, required
for building the interior ItI operator and constructing the discretized integral equation (\ref{eq:int2}).
$T_{\rm solve}$ reports the time in seconds required to invert the discretized system, while $R_{\rm solve}$
reports the memory in MB to store the inverse.  $T_{\rm apply}$ reports the time in seconds required to apply the
inverse to the incident wave dependent data. This table is independent of the choice of
potential or wavenumber.}
\end{table}

\begin{figure}[ht] 
\begin{tabular}{ccc}
\setlength{\unitlength}{1mm}
\begin{picture}(70,70)
\put(-15,0){\includegraphics[height=70mm]{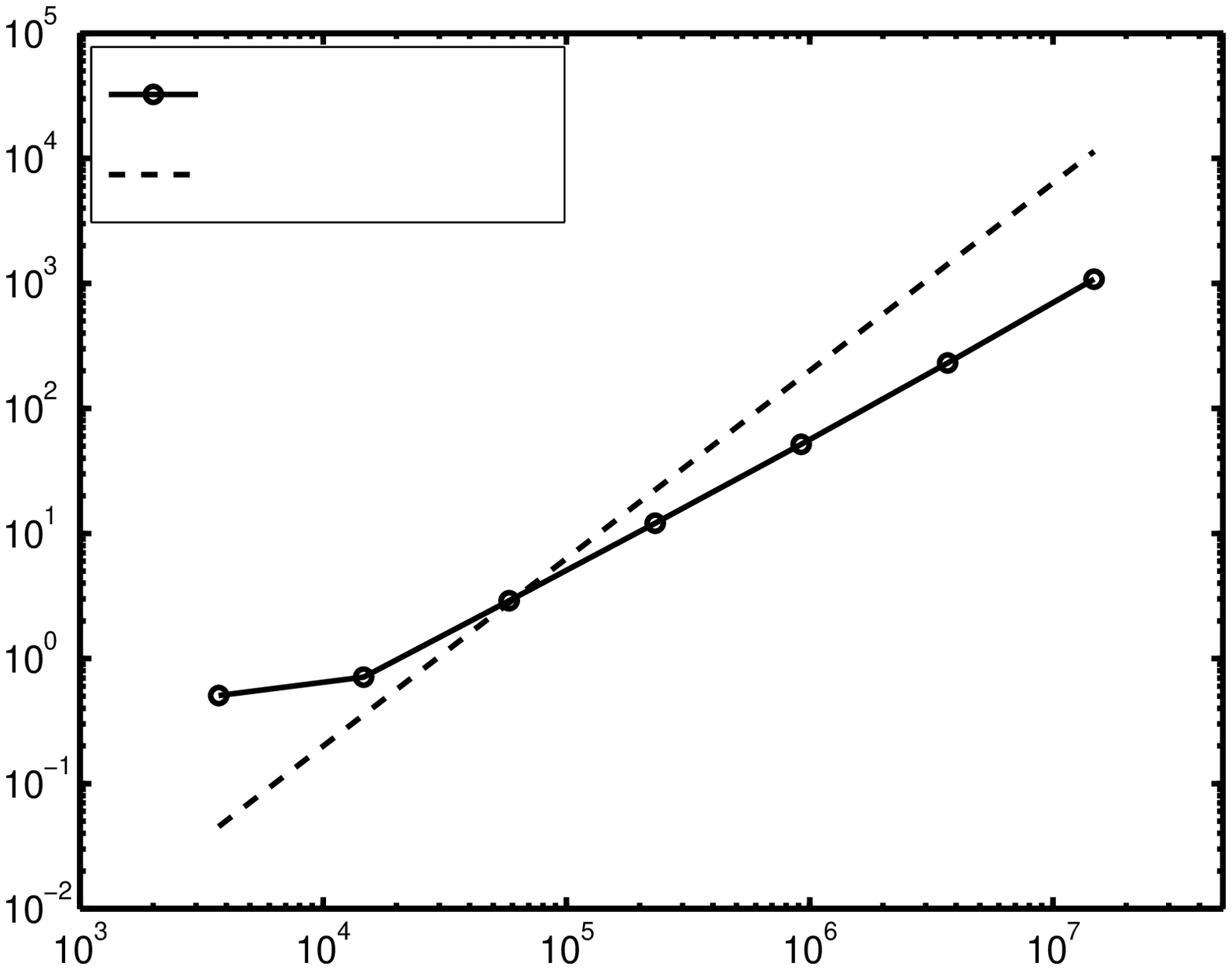}}
\put(-13,25){\rotatebox{90}{Time in seconds}}
\put(33,00){$N$}
\put(7,60){$T_{\rm build}$}
\put(7,54){$C_1\,N^{3/2}$}
\end{picture}
& \mbox{}\hspace{20mm}\mbox{} &
\setlength{\unitlength}{1mm}
\begin{picture}(70,70)
\put(-15,0){\includegraphics[height=70mm]{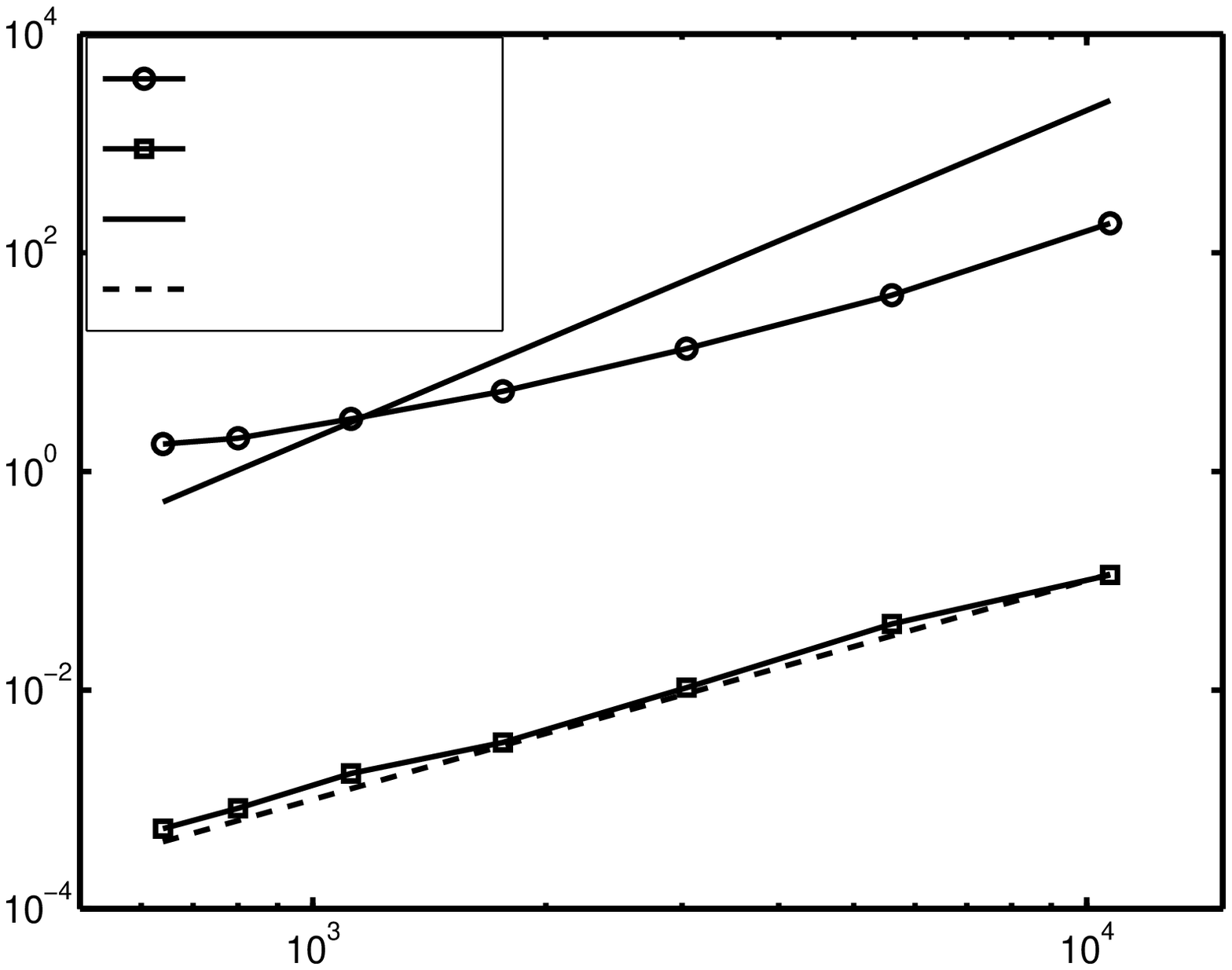}}
\put(-13,25){\rotatebox{90}{Time in seconds}}
\put(33,00){$n$}
\put(7,61){$T_{\rm solve}$}
\put(7,56){$T_{\rm apply}$}
\put(7,51){$C_2\, n^{3}$}
\put(7,46){$C_2\,n^{2}$}
\end{picture}\\
(a) && (b)
\end{tabular}
\caption{Time in seconds for each step in the proposed method. (For the comparison power law graphs,
the prefactors are $C_1 = 2\times 10^{-7}$ and $C_2 = 2\times 10^{-9}$.)}
\label{fig:times}
\end{figure}

\section{Concluding remarks}

This paper presents a robust, high accuracy direct method for solving
scattering problems involving smoothly varying media.
Numerical results show that the method converges to high order,
as expected, for choices of refractive index that are representative
of challenging problems that occur in applications.  Namely, for
problems dominated either by propagation (lenses) or by resonances
(a bandgap photonic crystal),
with of order 100 wavelengths on a side, the method converges to around 9-digit
accuracy with $3.7$ million unknowns.
The method is ideal for
problems where the far field scattering is desired for multiple incident
waves, since each additional incident wave requires merely applying a
dense matrix to its boundary data.
For example, a problem involving $14$ million unknowns
requires $21$ minutes of precomputation (to build the necessary operators),
but each additional solve takes approximately 0.1 seconds.
As discussed in section~\ref{sec:N}, for
low frequency problems,
these timings, and asymptotic behavior, could be improved
by replacing the dense linear algebra by faster algorithms exploiting
compressed representations.
One remaining open question is the existence of a convenient second-kind formulation which
involves the ItI map (and not the DtN map) of the domain $\Omega$.

\section*{Acknowledgments}

We are grateful for a helpful discussion with Michael Weinstein.
The work of AHB is supported by NSF grant DMS-1216656;
the work of PGM is supported by NSF grants DMS-0748488 and DMS-0941476.

\appendix

\section{Reference solution for plane wave scattering from radial potentials}
\label{s:ref}

In this appendix we describe how we generate reference solutions
with around 13 digits of accuracy
for the scattering problem from smooth radially-symmetric
potentials
such as
\be
b(r) = \pm 1.5 e^{-160r^2},
\label{br}
\ee
which are needed in section~\ref{sec:acc}.
Here $(r,\theta)$ are polar coordinates;
in what follows $(x_1,x_2)\in\mathbb{R}^2$ indicate Cartesian coordinates.
We choose a solution domain radius
$R>0$ such that $b$ is numerically negligible outside the ball $r<R$.
A plane wave incident in the positive $x_1$-direction
is decomposed into a polar Fourier (``angular momentum'') basis via the Jacobi--Anger expansion \cite[10.12.5]{dlmf},
$$
e^{i\kappa x_1} = J_0(\kappa r) + 2 \sum_{l=1}^\infty i^l J_l(\kappa r)\cos l\theta
~.
$$
We write $J_l(z) = (H^{(1)}_l(z) + H^{(2)}_l(z))/2$,
and then notice that the effect of the potential $b$ on this field is
to modify only the {\em outgoing} scattering coefficients.
Thus, restricting to a maximum order $L$, the full field becomes
\begin{equation}
u(r,\theta) \approx \half[H^{(1)}_0(\kappa r) + a_0 H^{(2)}_0(\kappa r)]
+ \sum_{l=1}^L i^l [H^{(1)}_l(\kappa r) + a_l H^{(2)}_l(\kappa r)] \cos l\theta,
\qquad
r>R~.
\label{hexp}
\end{equation}
The coefficients $\{a_l\}$ are known as {\em scattering phases};
by flux conservation they lie on the unit circle if $b(r)$ is a real-valued
function.
Convergence with respect to $L$ is exponential,
once $L$ exceeds $\kappa R$.
For the case of \eqref{br} we choose $R=0.5$ and $L=30$.

The phases are found in the following way. 
For each $l=0,\dots L$ we solve the homogeneous radial ODE,
$$
u_l'' + \frac{1}{r} u_l' + \left[ -\frac{l^2}{r^2} + \left( 1 - b(r)\right)\kappa^2\right] u_l = 0,
\qquad 0<r<R
$$
with initial conditions that correspond to a regular solution
of the form $u_l(r) \sim c r^l$ as $r\to 0^+$
(we implement the initial condition
by restricting the domain to $[r_0,R]$ for some small number
$r_0>0$ chosen such that the solution growing with increasing $r$
dominates sufficiently over the decaying one).
For the numerical solution we use
MATLAB's {\tt ode45} with machine precision requested for absolute
and relative tolerances.
(We note that the standard transformation $u(r) = r^lU(r)$ which mollifies the behavior at $r=0$ resulted in no improvement in accuracy.)
After extracting each interior
solution's Robin constant $\beta_l := u'_l(R)/u_l(R)$,
and matching value and derivative to \eqref{hexp} at $r=R$,
we get after simplification,
$$a_l = -\frac{\alpha_l^\ast}{\alpha_l},
\qquad \mbox{where } \;
\alpha_l = \kappa {H^{(1)}_l}'(\kappa R) - H^{(1)}_l(\kappa R) \beta_l
~,
$$
which completes the recipe for the phases.
The computation time required is a few seconds, due to the large number
of steps required by {\tt ode45}.
A simple accuracy test is independence of the phases with respect to variation
in $R$.
Values of $u(r,\theta)$ for $r\ge R$ may then be found via evaluating the sum
in \eqref{hexp}, and for $r<R$ by summation of the interior
solutions $\{u_l(r)\}$.



\end{document}